\documentclass[11pt,reqno,letterpaper]{amsart}

\usepackage{amssymb}
\usepackage{amsmath}
\usepackage{amsthm}
\usepackage{bbm}
\usepackage{mathrsfs}
\usepackage{doi}
\usepackage{graphicx}
\usepackage[shortlabels]{enumitem}

\usepackage{bookmark}
\usepackage{hyperref}
\hypersetup{pdfstartview={FitH}}


\addtolength{\hoffset}{-2cm}\addtolength{\textwidth}{4cm}
\addtolength{\voffset}{-0.5cm}\addtolength{\textheight}{1cm}

\numberwithin{equation}{section}

\newtheorem{theorem}{Theorem}
\newtheorem{lemma}[theorem]{Lemma}
\newtheorem{proposition}[theorem]{Proposition}
\newtheorem{corollary}[theorem]{Corollary}

\theoremstyle{definition}
\newtheorem{definition}{Definition}
\newtheorem{remark}[theorem]{Remark}




\begin{document}

\title[Lacunary sequences representing rational numbers]{Lacunary sequences whose reciprocal sums represent all rational numbers in an interval}

\author[Wouter van Doorn]{Wouter van Doorn}
\address{Wouter van Doorn, Groningen, the Netherlands} 
\email{wonterman1@hotmail.com}

\author[Vjekoslav Kova\v{c}]{Vjekoslav Kova\v{c}}
\address{Vjekoslav Kova\v{c}, Department of Mathematics, Faculty of Science, University of Zagreb, Bijeni\v{c}ka cesta 30, 10000 Zagreb, Croatia}
\email{vjekovac@math.hr}

\subjclass[2020]{Primary 11D68; 
Secondary 11B05, 
11B13, 
40A05} 


\keywords{Egyptian fractions, lacunary sequence, density, completeness, achievement set}

\begin{abstract}
Disproving a conjecture of Bleicher and Erd\H{o}s, we show that there exists a lacunary sequence of positive integers such that finite sums of reciprocals of its terms attain all rational numbers from a non-empty open interval. We also study several stronger variants of their original problem: determining the value of the optimal lacunarity parameter, representing rational numbers infinitely many times, finding such lacunary sequences with arbitrarily large jumps, and relating the maximal length of a filled interval to a prescribed lacunarity parameter.
\end{abstract}

\maketitle


\section{Introduction}

In \cite{BE76} and the follow-up work \cite{BE76a}, Bleicher and Erd\H{o}s studied various properties of representations of rational numbers as \emph{Egyptian fractions}, which are finite sums of the form
\[ \frac{1}{n_1} + \frac{1}{n_2} + \cdots + \frac{1}{n_t} \]
for some positive integers $n_1<n_2<\cdots<n_t$.
In \cite[Conjecture 4]{BE76} they asked the following.
\begin{quote}
\emph{Let $n_1<n_2<\cdots$ be an infinite sequence of positive integers such that $n_{i+1}/n_i>c>1$. Can the set of rational numbers $a/b$ for which
\begin{equation}\label{eq:BEsums}
\frac{a}{b} = \frac{1}{n_{i_1}} + \frac{1}{n_{i_2}} + \cdots + \frac{1}{n_{i_t}}
\end{equation}
is solvable for some $t$ contain all the rational numbers in some interval $(\alpha,\beta)$? We conjecture not.}\footnote{It is understood from the context that the indices $i_1,i_2,\ldots,i_t$ are distinct.}
\end{quote}
This question also appeared in Erd\H{o}s and Graham's problem book \cite[p.~58]{EG80} and, more recently, as Problem \#355 on Bloom's website \emph{Erd\H{o}s problems} \cite{EP}.

Erd\H{o}s and Graham \cite{EG80} suggested that a potential motivation to ask the above question comes from the notion of a \emph{reciprocal basis for the rational numbers} (introduced by Wilf in \cite{W61}), which is a set of positive integers for which every positive rational number can be written as a finite sum of reciprocals of its elements. For example, Graham \cite[\S4]{G64} showed that any set containing all large enough primes and squares is such a reciprocal basis, while Erd\H{o}s and Stein \cite[Theorem 4]{ES63} showed that for any positive sequence $(s_i)_{i=1}^{\infty}$ such that $\sum_i 1/s_i$ diverges, a reciprocal basis $(n_i)_{i=1}^{\infty}$ exists satisfying $n_i\geqslant  s_i$ for all $i$.

\begin{definition}
A sequence $(n_i)_{i=1}^{\infty}$ satisfying $n_{i+1}/n_i\geqslant  \lambda$ for some parameter $\lambda>1$ and every index $i\in\mathbb{N}$ is said to be \emph{$\lambda$-lacunary}. It is simply said to be \emph{lacunary} if it is $\lambda$-lacunary for some $\lambda>1$.
\end{definition}

An argument that seems to support the conjecture by Bleicher and Erd\H{o}s is that, even if one only wants to represent fractions whose denominators are powers of $2$, one cannot do better than a $2$-lacunary sequence, i.e., one cannot use a $\lambda$-lacunary sequence with $\lambda>2$. This is similarly true for the powers of any prime, and it is not obvious whether all these sequences can be efficiently combined into a single sequence which is still lacunary. Moreover, the aforementioned results from \cite{ES63,G64c} are about sequences $(n_i)_{i=1}^{\infty}$ that grow relatively slowly. This may have strengthened the belief of Bleicher and Erd\H{o}s that exponential growth is not possible. Somewhat cryptically, they commented:
\begin{quote}
\emph{If this conjecture is true then according to Graham \cite{G64} this is best possible.} \cite[p.\,167]{BE76}
\end{quote}

We will come back to Graham's results from \cite{G64} shortly. In any case, perhaps rather surprisingly, we show that lacunary sequences whose reciprocal sums represent all rational numbers in an interval in fact do exist. 
For any sequence of positive reals $(x_i)_{i=1}^{\infty}$, Erd\H{o}s and Graham \cite[p.~53]{EG80} denoted\footnote{The letter $P$ probably stands for the \emph{parallelotope} spanned by infinitely many ``edges'' $x_1,x_2,\ldots$.}
\begin{equation}\label{eq:theset}
P((x_i)_{i=1}^{\infty}) := \Big\{ \sum_{i\in T} x_i \,:\, T\subset\mathbb{N} \text{ finite} \Big\},
\end{equation}
so that the set of all finite sums in \eqref{eq:BEsums} can be written as $P((1/n_i)_{i=1}^{\infty})$. 

\begin{theorem}\label{thm01}
\begin{enumerate}[(a)]
\item\label{ita}
For every $\lambda\in(1,2)$ there exists a $\lambda$-lacunary sequence of positive integers $n_1,n_2,n_3,\ldots$ such that $P((1/n_i)_{i=1}^{\infty})\supseteq[0,2]\cap\mathbb{Q}$.

\item\label{itb}
The sequence from part \ref{ita} can be constructed so that it also satisfies $\lim_{i\to\infty}n_{i+1}/n_i=2$ and that every rational number in $(0,2]$ is equal to the sum $\sum_{i\in T}1/n_i$ for infinitely many finite sets $T\subset\mathbb{N}$.

\item\label{itc}
There is no $2$-lacunary sequence of positive integers $n_1,n_2,n_3,\ldots$ such that $P((1/n_i)_{i=1}^{\infty})$ contains all rational numbers from a non-empty open interval.

\end{enumerate}

\end{theorem}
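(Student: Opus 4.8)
I would prove part~\ref{itc} directly, distinguishing two cases according to the gaps of the sequence. Put $x_i:=1/n_i$; since $n_{i+1}\geqslant 2n_i$ we have $x_{i+1}\leqslant x_i/2$, and a short induction on $m$ then shows that any two distinct subset sums of $\{x_1,\dots,x_m\}$ differ by at least $x_m$. Hence the finite set $A_i:=P((1/n_k)_{k=1}^{i-1})$ has consecutive elements at least $1/n_{i-1}$ apart, while the tail $\sigma_i:=\sum_{k\geqslant i}1/n_k$ satisfies $\sigma_i\leqslant 1/n_{i-1}$ by comparison with a geometric series, with equality exactly when $n_{k+1}=2n_k$ for all $k\geqslant i-1$. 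If the ratios $n_{k+1}/n_k$ are not eventually equal to $2$, then $\sigma_i<1/n_{i-1}$ for every $i$, and from $\overline{P((1/n_i)_{i=1}^{\infty})}=A_i+\overline{P((1/n_k)_{k\geqslant i})}\subseteq\bigcup_{a\in A_i}[a,a+\sigma_i]$ we see that the closure of $P((1/n_i)_{i=1}^{\infty})$ lies in a \emph{disjoint} union of finitely many intervals of length $\sigma_i\to 0$; being connected, no non-degenerate closed interval fits inside it, so $P((1/n_i)_{i=1}^{\infty})$ cannot contain every rational of a non-empty open interval. In the complementary case $n_{k+1}=2n_k$ for all $k$ beyond some $i_0$, every finite reciprocal sum has the shape $a+b/(2^m n_{i_0})$ with $a\in A_{i_0}$ and $m,b\in\mathbb{Z}_{\geqslant 0}$; choosing an odd prime $q$ exceeding both $n_{i_0}$ and $\operatorname{lcm}(n_1,\dots,n_{i_0-1})$ and large enough that some reduced fraction $p/q$ lies in the given interval, the relation $p/q=a+b/(2^m n_{i_0})$ would force $q\mid 2^m n_{i_0}$, which is absurd. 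This proves \ref{itc}.

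For \ref{ita} and \ref{itb} the plan is to build $(n_i)$ in consecutive blocks, one around each power $2^k$. Because $\lambda<2$, the $k$-th block can be made to consist of several integers whose consecutive ratios all lie in $[\lambda,2)$, so that, by Kakeya's theorem, $\overline{P((1/n_i)_{i=1}^{\infty})}$ is the full interval $[0,\sum_i 1/n_i]$ once $\sum_i 1/n_i\geqslant 2$; this is the ``skeleton''. To hit every \emph{rational} (and not merely have the right closure), I would build divisibility into the blocks: the members of the $k$-th block would be multiples of an auxiliary modulus that grows slowly enough to still fit near $2^k$ yet is, for any prescribed integer $q$, divisible by $q$ once $k$ is large. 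A target $r=p/q\in[0,2]$ is then reduced block by block: at each stage one subtracts a suitable subset sum of the current block, keeping the remainder a rational of controlled denominator that decreases geometrically, until it becomes exactly $0$ after finitely many stages. Running this on disjoint families of blocks gives infinitely many representing sets for each rational, hence the ``infinitely often'' part of \ref{itb}, and arranging for all ratios to tend to $2$ gives $\lim_i n_{i+1}/n_i=2$; the finer statements in \ref{itb} and in the later parts of the paper amount to calibrating block sizes and gaps.

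The crux---already hinted at in the introduction---is making the lacunarity and this divisibility coexist: a $\lambda$-lacunary sequence has only $O(k)$ terms below $2^k$, and each term carries only a bounded amount of divisibility, so one must carefully \emph{schedule} the construction, postponing the ability to represent fractions with denominator $q$ until the scale is large enough to host a $q$-friendly block; this balance is precisely why the lacunarity parameter can be pushed arbitrarily close to, but not down to, $2$, which is the content of \ref{itc}. The second delicate point is \emph{exact termination}: one has to keep the denominators of the intermediate remainders under control so that a small nonnegative remainder is forced to be $0$ rather than merely tending to $0$.
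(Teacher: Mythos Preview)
Your proof of part~\ref{itc} is correct and matches the paper's argument (Corollary~\ref{cor:necess1} via Lemma~\ref{lm:Kakeya}) almost step for step: both split into the case where $n_{k+1}=2n_k$ eventually (handled by a prime not dividing any term) and the complementary case (handled by covering the closure with short disjoint intervals). Your presentation is slightly more direct because you work with $A_i+[0,\sigma_i]$ rather than the full lexicographic analysis of Lemma~\ref{lm:Kakeya}, but the content is the same.

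For parts~\ref{ita} and~\ref{itb} your sketch has the right instincts but is missing the one concrete idea that makes the construction go through. The paper does not build blocks ``around powers $2^k$'' with members that are \emph{multiples} of a modulus; it builds blocks whose members are \emph{divisors} of a terminal element $n_{m_k}$, so that $n_{m_k}$ is divisible by every earlier term. The key device is Lemma~\ref{lm:divisors}: for any $Q$ and any $\lambda\in(1,2)$ there is an $N$ divisible by $Q$ admitting a chain $1=d_0<d_1<\cdots<d_M=N$ of divisors with all ratios in $[\lambda,2]$, obtained from the irrationality of $\log_2 Q$ by choosing $a,b$ with $\lambda\leqslant Q^b/2^a\leqslant 2$. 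This is exactly the ``scheduling'' you allude to but do not supply, and it is what reconciles lacunarity with divisibility.

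Once that divisor structure is in place, the paper does \emph{not} reduce $q=p/q$ block by block as you propose; it represents $q$ in a single shot. Because all ratios are at most~$2$, Lemma~\ref{lm:reals} says the subset sums of $1/n_1,\ldots,1/n_{m_K}$ are $(1/n_{m_K})$-dense in $[0,\sum_{i\leqslant m_K}1/n_i]$; because every $n_j$ divides $n_{m_K}$, all those sums are integer multiples of $1/n_{m_K}$; hence they hit \emph{every} such multiple, including $q$ once $n_{m_K}$ is divisible by its denominator. This sidesteps the ``exact termination'' issue you yourself flag: in your greedy scheme the remainder's denominator after subtracting a block-sum is $\operatorname{lcm}$ of $q$ with the block's lcm, which need not shrink, so termination is genuinely unclear without the divisor-chain structure. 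The ``infinitely many representations'' and $n_{i+1}/n_i\to 2$ in~\ref{itb} then follow by tightening the ratio window to $[\max(\lambda,2-1/k),2]$ at step~$k$ and splitting off $1/n_{m_k}$ before applying the same argument to the tail.
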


The proof of parts \ref{ita} and \ref{itb} of Theorem \ref{thm01} is more or less constructive, and even provides such a sequence for which the equality $n_{i+1}/n_i = 2$ holds for infinitely many indices $i$. It will be presented in Section \ref{sec:thm01} after describing, in Section \ref{sec:general_suff}, a more general sufficient condition for a sequence of positive integers to have reciprocal sums that contain an interval; see Proposition \ref{prop:general}.
Already part \ref{ita} answers (a strong version of) the question of Bleicher and Erd\H{o}s, while part \ref{itb} achieves further strengthenings.
Part \ref{itc} shows that the assumption $\lambda<2$ in part \ref{ita} is optimal. It will be the content of Corollary \ref{cor:necess1} in Section \ref{sec:general_nec}.

Presence of the explicit interval $[0,2]$ in Theorem \ref{thm01} motivates the following definition.

\begin{definition}
For every $\lambda\in(1,\infty)$ let $R(\lambda)$ be the least upper bound on the length $\beta-\alpha$ of an interval $(\alpha,\beta)$ for which there exists a $\lambda$-lacunary sequence of positive integers $n_1,n_2,n_3,\ldots$ such that $P((1/n_i)_{i=1}^{\infty})$ contains all rational numbers from $(\alpha,\beta)$. 
\end{definition}

We can give a formula for $R(\lambda)$, which, in particular, allows one to determine its numerical value to arbitrary precision.

\begin{theorem}\label{thm02}
For every $\lambda\in(1,2)$ the quantity $R(\lambda)$ is given by
$\sum_{i=1}^{\infty}1/a_i$,
where the sequence $(a_i)_{i=1}^{\infty}$ is recursively defined as
\[ \left\{ {\setlength{\arraycolsep}{2pt}\begin{array}{rl}
a_1 & := 1, \\
a_{i+1} & := \lceil \lambda a_i \rceil \quad\text{for } i\geqslant  1.
\end{array}} \right. \]
Moreover,
\[ \lim_{\lambda\to 1^+} R(\lambda) = +\infty,\quad \lim_{\lambda\to 2^-} R(\lambda) = 2, \]
and $R(\lambda)=0$ when $\lambda\geqslant  2$.
\end{theorem}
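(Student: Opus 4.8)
The plan is to prove the formula $R(\lambda)=\sum_{i=1}^{\infty}1/a_i$ by two matching inequalities, and then to deduce the three limiting assertions from the formula together with the elementary asymptotics of the sequence $(a_i)$ regarded as a function of $\lambda$.

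The inequality $R(\lambda)\le\sum_{i=1}^{\infty}1/a_i$ is the easy direction. If $(n_i)_{i=1}^{\infty}$ is $\lambda$-lacunary and $P((1/n_i)_{i=1}^{\infty})\supseteq(\alpha,\beta)\cap\mathbb{Q}$, then every element of $P((1/n_i)_{i=1}^{\infty})$ is at most $\Sigma:=\sum_{i=1}^{\infty}1/n_i$, so by density $(\alpha,\beta)\subseteq[0,\Sigma]$ and hence $\beta-\alpha\le\beta\le\Sigma$. A one-line induction ($n_1\ge 1=a_1$, and $n_{i+1}\ge\lceil\lambda n_i\rceil\ge\lceil\lambda a_i\rceil=a_{i+1}$) shows that $(a_i)$ is pointwise minimal among $\lambda$-lacunary integer sequences, so $\Sigma\le\sum_i 1/a_i$; taking the supremum over all admissible sequences and intervals gives the bound.

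The substantial direction is $R(\lambda)\ge\sum_{i=1}^{\infty}1/a_i$, which I would prove by exhibiting, for each $\varepsilon>0$, a $\lambda$-lacunary sequence filling all rationals in some interval of length exceeding $\sum_i 1/a_i-\varepsilon$. The idea is to let the sequence \emph{shadow} the minimal sequence: fix $N$ with $\sum_{i>N}1/a_i<\varepsilon/2$, take $n_i=a_i$ for $i\le N$, and continue with a $\lambda$-lacunary tail chosen dense enough for the sufficient condition of Proposition~\ref{prop:general} to apply (this may force $n_i$ slightly larger than $a_i$ for $i>N$, but the reciprocals of those terms are anyway negligible). Proposition~\ref{prop:general} then produces an interval of rationals whose length is within $O(\varepsilon)$ of $\sum_{i=1}^{\infty}1/n_i\ge\sum_i 1/a_i-\varepsilon/2$; letting $\varepsilon\to 0$ finishes. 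I expect the main obstacle to be precisely the bookkeeping that keeps the constructed sequence within $\varepsilon$ of $(a_i)$ \emph{while} meeting the hypotheses of Proposition~\ref{prop:general} --- equivalently, controlling how much one must inflate the $\lceil\lambda a_i\rceil$-recursion to buy the slack needed to represent rationals and not merely real numbers (recall that such slack is genuinely necessary, by part~\ref{itc} of Theorem~\ref{thm01}).

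Finally, the limits follow from the formula. Both $R(\lambda)$ and $\sum_i 1/a_i(\lambda)$ are non-increasing in $\lambda$, since a $\mu$-lacunary sequence is $\lambda$-lacunary whenever $\mu>\lambda$, and $a_i(\lambda)$ is non-decreasing in $\lambda$. As $\lambda\to1^+$, an easy induction gives $a_i(\lambda)=i$ for every $i\le 1+1/(\lambda-1)$, so $R(\lambda)\ge\sum_{i\le 1/(\lambda-1)}1/i=\log\frac{1}{\lambda-1}+O(1)\to+\infty$. As $\lambda\to2^-$, one always has $a_i(\lambda)\le 2^{i-1}$, so $R(\lambda)\ge\sum_i 2^{1-i}=2$; conversely $a_i(\lambda)=2^{i-1}$ holds for all $i$ up to roughly $\log_2\frac{1}{2-\lambda}$, beyond which the tail of $\sum_i 1/a_i$ is $O(2-\lambda)$, whence $R(\lambda)\le 2+O(2-\lambda)$ and therefore $R(\lambda)\to2$. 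Lastly, $R(\lambda)=0$ for $\lambda\ge2$ is exactly Corollary~\ref{cor:necess1} (part~\ref{itc} of Theorem~\ref{thm01}), since $\lambda$-lacunarity then implies $2$-lacunarity and no interval of rationals can be filled at all.
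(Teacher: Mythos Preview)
Your proposal is correct and follows essentially the same route as the paper: the upper bound via $n_i\ge a_i$, the lower bound by starting with $n_i=a_i$ for $i\le K$ and then extending so that Proposition~\ref{prop:general} applies, and the limits via elementary estimates on $(a_i)$ together with Corollary~\ref{cor:necess1}.

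Two minor remarks. First, your anticipated ``main obstacle'' is not the right one: once $\sum_{i\le K}1/a_i>r-\varepsilon$ is secured, there is no need to keep the tail close to $(a_i)$ at all; the only issue is to bridge from $a_K$ to a term divisible by $a_1\cdots a_K$ (and later by $p_1\cdots p_k$) while maintaining ratios in $[\lambda,2]$, which the paper handles by the small variant Lemma~\ref{lm:divisors2} of Lemma~\ref{lm:divisors}. Second, for $\lambda\to2^-$ the paper's upper bound is simpler than yours: from $a_i\ge\lambda^{i-1}$ one gets $R(\lambda)\le\lambda/(\lambda-1)\to2$ directly, without tracking how long $a_i=2^{i-1}$ persists.
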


\begin{figure}
\includegraphics[width=0.5\linewidth]{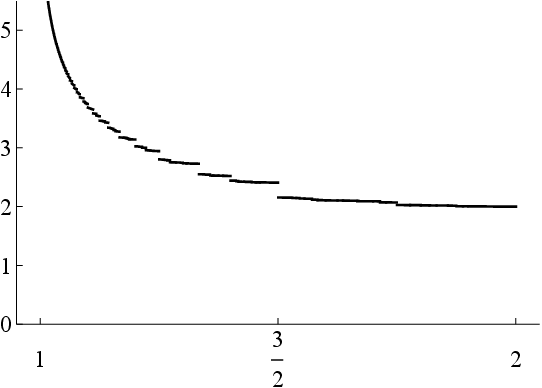}
\caption{Graph of the function $R$ on $(1,2)$.}
\label{fig:fig1}
\end{figure}

A graph of the function $\lambda\mapsto R(\lambda)$ on the interval $(1,2)$ is depicted in Figure \ref{fig:fig1}.
The proof of Theorem \ref{thm02} will be presented in Section \ref{sec:thm02}, but it will again be a consequence of general observations from Section \ref{sec:general_suff}. Both Sections \ref{sec:thm01} and \ref{sec:thm02} will end with some explicit examples.

As it turns out, the proof of Theorem \ref{thm02} gives, for every $\varepsilon>0$, a $\lambda$-lacunary sequence $(n_i)_{i=1}^{\infty}$ such that $P((1/n_i)_{i=1}^{\infty})\supseteq(0,R(\lambda)-\varepsilon)\cap\mathbb{Q}$. That is, it is more economical to fill in $(0,R(\lambda)-\varepsilon)\cap\mathbb{Q}$ rather than $(\alpha,\alpha+R(\lambda)-\varepsilon)\cap\mathbb{Q}$ for some $\alpha>0$.

The approach we take in order to prove the above results is rather flexible, and one may even require the sequence $n_1,n_2,\ldots$ to have infinitely many large ratios $n_{i+1}/n_i$ in the following precise sense.

\begin{theorem}\label{thm03}
For every $\Lambda\geqslant 2$ and $1<\lambda<\Lambda/(\Lambda-1)$ there exists a $\lambda$-lacunary sequence of positive integers $(n_i)_{i=1}^{\infty}$ such that
\begin{equation}\label{eq:largejumps}
n_{i+1}>\Lambda n_i \quad\text{for infinitely many indices } i\in\mathbb{N}, 
\end{equation}
and for which the set $P((1/n_i)_{i=1}^{\infty})$ contains all rational numbers from the interval $[0,\sum_{i=1}^{\infty}1/n_i)$.
\end{theorem}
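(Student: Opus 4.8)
The plan is to derive Theorem \ref{thm03} from the general sufficient condition of Proposition \ref{prop:general}: I would construct, for the given $\lambda$ and $\Lambda$, a single $\lambda$-lacunary sequence $(n_i)_{i=1}^{\infty}$ of positive integers that simultaneously satisfies the hypotheses of Proposition \ref{prop:general} relative to the interval $[0,\sum_i 1/n_i)$ and has $n_{i+1}>\Lambda n_i$ along an infinite set of indices. The numerical heart of the matter is that $\lambda<\Lambda/(\Lambda-1)$ is a \emph{strict} inequality: since $t\mapsto t/(t-1)$ is continuous and decreasing, we may fix $\Lambda'$ with $\Lambda<\Lambda'$ and still $\lambda<\Lambda'/(\Lambda'-1)$, i.e. $\delta:=\frac{\lambda}{\Lambda'(\lambda-1)}-1>0$. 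The value $\frac{\lambda}{\Lambda'(\lambda-1)}$ is precisely the limiting value of $n_k\sum_{j>k}1/n_j$ in the worst local configuration, namely when the index $k$ is immediately followed by a single step of ratio at most $\Lambda'$ and then by a long block of steps of ratio $\lambda$. Observe also that shrinking the ordinary ratios toward the minimal allowed value $\lambda$ only increases $\sum_{j>k}1/n_j$ relative to $1/n_k$, so these are the extremal local shapes we aim for.

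I would build $(n_i)$ term by term, steered by three bookkeeping devices: a very sparse increasing set $i_1<i_2<\cdots$ of \emph{jump positions}; a sequence $(m_k)_{k\geqslant1}$ enumerating $\mathbb{N}$ in which every positive integer occurs infinitely often and which grows slowly enough that $m_k=o(n_k)$ (since $n_k\geqslant\lambda^k$, any $m_k=O(k)$ will do); and the prescription that if $k$ is not a jump position then $n_{k+1}$ is the least multiple of $m_k$ with $n_{k+1}\geqslant\lambda n_k$, while if $k$ is a jump position then $n_{k+1}=\lfloor\Lambda n_k\rfloor+1$. For all large $n_k$ the window $[\lambda n_k,\Lambda'n_k]$ then contains a multiple of $m_k$ at distance less than $m_k$ from its left endpoint, and $\lfloor\Lambda n_k\rfloor+1\leqslant\Lambda'n_k$; hence every step has ratio in $[\lambda,\Lambda']$ (so the sequence is $\lambda$-lacunary), the ordinary steps have ratio $\lambda(1+o(1))$, and the jump steps have ratio in $(\Lambda,\Lambda']$, giving \eqref{eq:largejumps} along $\{i_\ell\}$. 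The finitely many initial terms, where these asymptotic statements may fail, are chosen by hand. By the choice of $(m_k)$ every positive integer divides $n_i$ for infinitely many $i$, which furnishes the divisibility-type hypothesis of Proposition \ref{prop:general} responsible for attaining \emph{all} rationals, and $\sum_i 1/n_i\leqslant\sum_i\lambda^{-i}<\infty$, so $[0,\sum_i 1/n_i)$ is a genuine non-degenerate interval.

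What remains, and where the real work is, is the metric hypothesis of Proposition \ref{prop:general}, namely $\sum_{j>k}1/n_j\geqslant 1/n_k$ for all $k$ (for large $k$ via the estimates below, for the rest via the hand-chosen initial segment). At an index $k=i_\ell$ directly preceding a jump, $n_{k+1}\leqslant\Lambda'n_k$ and the following $g_\ell:=i_{\ell+1}-i_\ell-1$ steps have ratio $\lambda(1+o(1))$, so $\sum_{j>k}1/n_j\geqslant\frac{1}{\Lambda'n_k}\sum_{s=0}^{g_\ell}(\lambda(1+o(1)))^{-s}$, which converges to $\frac{1}{\Lambda'n_k}\cdot\frac{\lambda}{\lambda-1}=\frac{1+\delta}{n_k}$ as $\ell\to\infty$, because $g_\ell\to\infty$ and the $o(1)$ errors (from the ceilings and from $m_k=o(n_k)$) vanish; hence this exceeds $1/n_k$ for $\ell$ large. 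At every other index $k$ the tail contains at least one extra ratio-$\lambda$ factor before a jump is reached, and since $\lambda<2$ a short two-case analysis (a possibly short run of ordinary steps terminating in a jump, followed by a long ordinary run of length $t$) shows that $n_k\sum_{j>k}1/n_j$ is bounded below by the minimum over $t\geqslant1$ of $\frac{\lambda^{-1}(1-\lambda^{-t})}{1-\lambda^{-1}}+(1+\delta)\lambda^{-t}$, which equals $\min\{(2+\delta)/\lambda,\,1/(\lambda-1)\}>1$; so the inequality holds there with uniform slack as well. Proposition \ref{prop:general} then gives $P((1/n_i)_{i=1}^{\infty})\supseteq[0,\sum_i 1/n_i)\cap\mathbb{Q}$, as desired. \emph{The principal obstacle is executing the two preparatory constructions compatibly}: the jump positions $i_\ell$ must grow fast enough that every $o(1)$ error above is dominated by the fixed slack $\delta$ uniformly in $k$, while $(m_k)$ must grow slowly enough to keep $m_k=o(n_k)$ and yet still hit every modulus infinitely often — both of which become routine once these targets are identified.
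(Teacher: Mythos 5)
Your overall strategy---derive the result from Proposition~\ref{prop:general}, place sparse big jumps right after a fresh divisibility checkpoint, follow each by a long run of ratio-$\lambda$ steps, and control the resulting geometric sums via the strict inequality $\lambda < \Lambda/(\Lambda-1)$---is exactly the paper's. The numerical heart (showing that the worst-case value of $n_k\sum_{j>k}1/n_j$ after one $\Lambda'$-step and then pure $\lambda$-steps is $\lambda/(\Lambda'(\lambda-1))>1$, with room to spare once $\Lambda' > \Lambda$ and the tail is long enough) is also correct and matches the paper's choice of $\varepsilon$ and $U$ in \eqref{eq:thm3pr1}--\eqref{eq:thm3pr3}.

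However, there is a genuine gap in how you meet the hypotheses of Proposition~\ref{prop:general}. You verify only hypothesis \ref{it2} (every positive integer divides some $n_i$) by taking $n_{k+1}$ to be the least multiple of $m_k$ beyond $\lambda n_k$, and then describe the remaining work as checking the Kakeya-type inequality $\sum_{j>k}1/n_j \geqslant 1/n_k$. But Proposition~\ref{prop:general} requires much more: you need a distinguished subsequence of indices $m_1<m_2<\cdots$ with $n_{m_k}$ divisible by \emph{every} $n_j$ with $j<m_k$ (hypothesis \ref{it3}), and the metric hypothesis to verify is the block-wise inequality \eqref{eq:gensuff}, phrased relative to that subsequence, not the global Kakeya inequality. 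The divisibility hypothesis \ref{it3} is not a cosmetic technicality: it is what lets the proof of Proposition~\ref{prop:general} conclude that all partial sums $\sum_{i\in S}1/n_i$, $S\subseteq\{1,\dots,m_K\}$, are integer multiples of $1/n_{m_K}$, so that $(1/n_{m_K})$-dense filling actually hits every rational with denominator dividing $n_{m_K}$. Your construction (``least multiple of $m_k$'') gives only that each modulus divides \emph{some} term, not that there are accumulation indices dividing all earlier terms; without \ref{it3} the conclusion of Proposition~\ref{prop:general} is simply not available. The paper repairs this by interleaving, in each step, a phase governed by Lemma~\ref{lm:divisors2}, which extends any finite prefix of the sequence (while keeping ratios in $[\lambda,2]$) to a term divisible by all the previous ones and by $p_1\cdots p_k$. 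You would need an analogous device, and since that phase uses ratios up to $2>\lambda+\varepsilon$, you would also have to reconcile it with your ``two-case'' lower bound on $n_k\sum_{j>k}1/n_j$, which currently assumes all non-jump ratios are $\lambda(1+o(1))$.
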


Even though property \eqref{eq:largejumps} might appear to collide with part \ref{itc} of Theorem \ref{thm01}, we can make up for ``large jumps'' by following them up with a large number of small ones, while still maintaining $\lambda$-lacunarity, as long as $\lambda<\Lambda/(\Lambda-1)$. After the proof of Theorem \ref{thm03}, which can be found in Section \ref{sec:thm03}, we will furthermore explain why this latter inequality is optimal; no such $\lambda$-lacunary sequence exists with $\lambda\geqslant \Lambda/(\Lambda-1)$.

Let us mention that we were not able to fully characterize all sequences $n_1<n_2<\cdots$ of positive integers such that $P((1/n_i)_{i=1}^{\infty})$ consists of all rational numbers contained in $[0,\sum_{i=1}^{\infty}1/n_i)$. Such a characterization seems to be out of reach of the ideas employed here. However, Inequality \eqref{eq:neceint} below is a necessary requirement, while Proposition \ref{prop:general} will provide us with sufficient conditions. For sequences of a special form defined below, Graham \cite{G64} actually did achieve one representability characterization, while Eppstein \cite{Epp21} recently proved a result of a similar nature. As these results notably relate to the approach we take in this paper, let us take a more detailed look at them.

In \cite[Theorem 1]{Epp21} Eppstein showed that, if a subset $S$ of positive integers 
\begin{itemize}
\item is closed under multiplication by $2$ (i.e., $2S\subseteq S$) and
\item contains a multiple of each (odd) integer,
\end{itemize}
then $P((1/n)_{n\in S})$ is precisely $[0,\sum_{n\in S}1/n)\cap\mathbb{Q}$.\footnote{Here we are slightly abusing the notation and writing $P((x_n)_{n\in S})$ in the place of $P((x_{n_i})_{i=1}^{\infty})$, where $(n_i)_{i=1}^\infty$ is the strictly increasing sequence enumerating a set $S\subseteq\mathbb{N}$.}
He then applied this result to resolve a conjecture of Sun \cite{Sun}, which states that every positive rational number can be written as a finite sum of unit fractions whose denominators are distinct practical numbers, which are positive integers $N$ such that every positive integer smaller than $N$ can be written as a sum of distinct divisors of $N$.

Even though it seems that Eppstein's theorem cannot directly be applied to answer the aforementioned question of Bleicher and Erd\H{o}s, in its proof we do recognize similarity with some of the techniques employed here. In particular, using these techniques we are able to present a short alternative proof of Eppstein's theorem (see Theorem \ref{thm04} in the appendix).

As for Graham's theorem, for a set $S\subseteq\mathbb{N}$ enumerated by an increasing sequence $(s_i)_{i=1}^{\infty}$, consider the set of all finite products of distinct elements of $S$,
\[ \{ s_{i_1} s_{i_2} \cdots s_{i_\ell} \,:\, \ell\in\mathbb{N},\ i_1<i_2<\cdots<i_\ell \}, \]
thus removing repeated values, and let $M(S)$ denote the sequence obtained by sorting its elements in the ascending order. Assuming that 
\begin{itemize}
\item the sequence of ratios $(s_{i+1}/s_i)_{i=1}^{\infty}$ is bounded and that 
\item $P(M(S))$ contains all sufficiently large positive integers,
\end{itemize}
Graham showed \cite[Theorem 5]{G64} that a rational number belongs to $P(1/M(S))$ if and only if\footnote{Here we write $1/(n_i)_{i=1}^\infty$ to denote $(1/n_i)_{i=1}^{\infty}$.} 
\begin{itemize}
\item it is a decreasing limit of a sequence in $P(1/M(S))$ and 
\item its denominator (in least terms) divides some term of $M(S)$.
\end{itemize}
Graham wondered if the condition on the boundedness of consecutive ratios $s_{i+1}/s_i$
\begin{quote}
(\ldots) \emph{could be weakened or even removed} \cite[p.\,205]{G64}
\end{quote}
and he similarly mentioned the following some $50$ years later:
\begin{quote}
\emph{It is not known whether the condition that $s_{n+1}/s_n$ be bounded is needed for the conclusion of the theorem to hold.} \cite[p.\,295]{Gra13}
\end{quote}
Let us record a simple observation that Eppstein's theorem already answers Graham's question in a certain sense, by giving an example of a set $S$ with $\limsup_{i\to\infty}s_{i+1}/s_i=\infty$ for which the same conclusion nevertheless holds. 
Namely, if one takes
\[ S = \big\{ 2^{2^k} \,:\, k\in\mathbb{N} \big\} \cup \big\{ k! \,:\, k\in\mathbb{N} \setminus \{1\} \big\}, \]
then one readily verifies the containment $2M(S)\subseteq M(S)$, so that Eppstein's theorem applies to the set $M(S)$.
Thus, the above $S$ provides an example where the conclusion of Graham's theorem holds, even though neither of its two conditions are satisfied. Indeed, the ratio of consecutive values in $S$ is unbounded, while $S$, and therefore $P(M(S))$, only contains even integers.

As a final note before we dive in, it will be useful to relate the set defined in \eqref{eq:theset} to the infinitary (and uncountable) notion of the so-called achievement set, where we allow subsets $S \subseteq \mathbb{N}$ to range over infinite sets as well. An overview of the classical research on achievement sets of series can be found in the expository paper \cite{BFP13}. Variations of this concept have proved to be instrumental in recent solutions to some other problems on unit fractions posed by Erd\H{o}s, see for example \cite{CK25} and \cite{KT25}. These latter papers are concerned with infinite sums of unit fractions however, which can no longer be viewed as Egyptian fractions. Therefore, these connections are useful mainly to motivate subsequent formulations and proofs; no results from \cite{CK25} or \cite{KT25} will be needed here.


\section{General necessary conditions}
\label{sec:general_nec}

Let us begin by investigating necessary conditions for the set defined in \eqref{eq:theset} to contain all rational numbers from a non-degenerate open interval.
Keeping in mind the intended substitution $x_i=1/n_i$ for a lacunary sequence of positive integers $(n_i)_{i=1}^{\infty}$, let us first treat $(x_i)_{i=1}^{\infty}$ as just any decreasing sequence of positive real numbers for which $\sum_i x_i$ converges. Consider the set of both finite and infinite sums of its terms:
\[ A((x_i)_{i=1}^{\infty}) := \Big\{ \sum_{i\in T} x_i \,:\, T\subseteq\mathbb{N} \Big\}. \]
This set is often called the \emph{achievement set} of the series $\sum_i x_i$. Already Kakeya \cite{Kak14a,Kak14b} observed that the condition
\[ x_i > \sum_{j=i+1}^{\infty} x_j \quad \text{for every } i\in\mathbb{N} \]
implies that $A((x_i)_{i=1}^{\infty})$ is a Cantor set in the topological sense. In particular, it is a compact set with empty interior.
This is not too hard to see and we can, in fact, reach the same conclusion from a slightly weaker hypothesis.

\begin{lemma}\label{lm:Kakeya}
If $x_1>x_2>x_3>\cdots$ are positive reals such that $\sum_i x_i$ converges and that also satisfy
\begin{align}
& x_i \geqslant  \sum_{j=i+1}^{\infty} x_j \quad \text{for every } i\in\mathbb{N} \text{ and } \label{eq:Cantor0} \\
& x_i > \sum_{j=i+1}^{\infty} x_j \quad \text{for infinitely many } i\in\mathbb{N},  \label{eq:Cantor1}
\end{align}
then the set $A((x_i)_{i=1}^{\infty})$ is compact and has empty interior.
\end{lemma}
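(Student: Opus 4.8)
The plan is to prove compactness and empty interior separately, since they use the two hypotheses in different ways. For compactness, I would first observe that $A((x_i)_{i=1}^{\infty})$ is a bounded subset of $\mathbb{R}$ (it is contained in $[0,\sum_i x_i]$), so it suffices to show it is closed. Take a sequence $\big(\sum_{i\in S_k}x_i\big)_{k=1}^{\infty}$ in $A$ converging to some real $y$; I want to produce a set $S\subseteq\mathbb{N}$ with $y=\sum_{i\in S}x_i$. A clean way is to identify each subset $S_k$ with its indicator in $\{0,1\}^{\mathbb{N}}$, which is compact in the product topology, and to note that the summation map $\xi\mapsto\sum_i \xi_i x_i$ from $\{0,1\}^{\mathbb{N}}$ to $\mathbb{R}$ is continuous (uniform convergence of partial sums, using $\sum_i x_i<\infty$). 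The continuous image of a compact set is compact, hence closed, so $A$ is compact. This step only uses convergence of $\sum_i x_i$, not \eqref{eq:Cantor0} or \eqref{eq:Cantor1}.

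For the empty interior, the key quantitative tool is a description of which reals lie in $A$ in terms of the ``greedy'' expansion. Hypothesis \eqref{eq:Cantor0}, $x_i\geqslant\sum_{j>i}x_j$, means that once we decide to omit index $i$ from $S$, the remaining tail $\sum_{j>i}x_j$ can contribute at most $x_i$. I would argue that if $(\alpha,\beta)\subseteq A$ with $\alpha<\beta$, then for every index $i$ the ``gap'' structure forces a contradiction. Concretely: fix any point $y\in(\alpha,\beta)$ and run the greedy algorithm, letting $S=\{i: x_i\leqslant y-\sum_{j<i,\,j\in S}x_j\}$ be the greedily chosen set; by \eqref{eq:Cantor0} one shows $\sum_{i\in S}x_i=y$ and that $S$ is the unique subset summing to $y$ unless equality $x_i=\sum_{j>i}x_j$ holds at some stage. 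Now pick, using \eqref{eq:Cantor1}, an index $i_0$ so large that $\sum_{j\geqslant i_0}x_j<\beta-\alpha$ and $x_{i_0}>\sum_{j>i_0}x_j$; consider the point $z$ obtained from some $y\in(\alpha,\beta)$ by modifying only coordinates $\geqslant i_0$. The set of values $\{\sum_{i\in S}x_i : S\subseteq\{i_0,i_0+1,\dots\}\}$ is itself an achievement set lying in an interval of length $\sum_{j\geqslant i_0}x_j<\beta-\alpha$, and with $x_{i_0}>\sum_{j>i_0}x_j$ it splits into two pieces separated by a genuine gap $\big(\sum_{j>i_0}x_j,\ x_{i_0}\big)$; translating this gap by the fixed contribution of coordinates $<i_0$ that is needed to reach $(\alpha,\beta)$ produces a nonempty subinterval of $(\alpha,\beta)$ missed by $A$, a contradiction.

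I expect the main obstacle to be the bookkeeping in the last step: making precise that some choice of the ``low'' coordinates $<i_0$ does land the associated translate of the tail set inside $(\alpha,\beta)$, so that the gap $\big(\sum_{j>i_0}x_j,\,x_{i_0}\big)$ actually falls in $(\alpha,\beta)$. One way to handle this cleanly is to first note $A = P_{<i_0}+A_{\geqslant i_0}$ where $P_{<i_0}$ is the finite set of partial sums over subsets of $\{1,\dots,i_0-1\}$ and $A_{\geqslant i_0}$ is the tail achievement set; since $(\alpha,\beta)\subseteq A$ and $\beta-\alpha>\operatorname{diam}(A_{\geqslant i_0})$, some translate $p+A_{\geqslant i_0}$ with $p\in P_{<i_0}$ must meet $(\alpha,\beta)$ in a set containing an interval of length $>\operatorname{diam}(A_{\geqslant i_0})-$ (sum of the two shorter adjacent translates' overlaps) — more simply, $(\alpha,\beta)\cap(p+A_{\geqslant i_0})$ cannot be covered by the two sub-blocks $p+A_{>i_0}$ and $p+x_{i_0}+A_{>i_0}$, each of diameter $\leqslant\sum_{j>i_0}x_j$, once $\beta-\alpha$ is chosen larger than $2\sum_{j>i_0}x_j$ (which is possible by shrinking to a smaller subinterval of the original $(\alpha,\beta)$ and enlarging $i_0$). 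That inequality, $x_{i_0}>\sum_{j>i_0}x_j$ from \eqref{eq:Cantor1}, is exactly what guarantees the gap between the two blocks is nonempty, completing the contradiction.
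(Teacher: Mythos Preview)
Your compactness argument is correct and matches the paper's (which simply cites the literature for this step, noting that only convergence of $\sum_i x_i$ is needed).

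For the empty interior, your decomposition $A=P_{<i_0}+A_{\geqslant i_0}$ and the observation that $A_{\geqslant i_0}$ has the genuine gap $\big(\sum_{j>i_0}x_j,\,x_{i_0}\big)$ are on the right track, and indeed match the paper's strategy. But the final paragraph does not close. You locate a gap inside \emph{one} translate $p+A_{\geqslant i_0}$, yet never rule out that another translate $p'+A_{\geqslant i_0}$ covers that gap. Your sentence ``$(\alpha,\beta)\cap(p+A_{\geqslant i_0})$ cannot be covered by the two sub-blocks $p+A_{>i_0}$ and $p+x_{i_0}+A_{>i_0}$'' is false as stated: that intersection is a subset of $p+A_{\geqslant i_0}$, which \emph{equals} the union of those two blocks. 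What you need is that the full interval $(\alpha,\beta)$ is not covered by the union over \emph{all} $p$, and for that you must control how the translates for different $p$ interact. This is precisely where hypothesis~\eqref{eq:Cantor0} enters, and in your argument it is mentioned but never actually used in the contradiction.

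The paper fills this hole as follows. Letting $i_k$ be an index with strict inequality and $r_k=\sum_{j>i_k}x_j$, one covers $A$ by the $2^{i_k}$ intervals $\sum_{j=1}^{i_k}\epsilon_j x_j+[0,r_k]$. Using~\eqref{eq:Cantor0} at every index $\leqslant i_k$, a lexicographic comparison shows that any two of these intervals have disjoint interiors; moreover, the touching analysis shows each interval can touch at most one neighbour, so $A$ lies in a disjoint union of closed intervals of length at most $2r_k$. Since $r_k\to0$, no nondegenerate interval fits inside $A$. Your sketch would become correct once you insert exactly this disjoint-interiors step.
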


\begin{proof}
For compactness of $A((x_i)_{i=1}^{\infty})$ one only needs the (absolute) convergence of the series $\sum_i x_i$; see \cite{Kak14a,Kak14b} or the more recent references mentioned after \cite[Theorem 21.4]{BFP13}. Let us turn to the claim about its interior.
If $i_1<i_2<i_3<\cdots$ are precisely the indices $i$ for which \eqref{eq:Cantor1} holds, then, for every $k\in\mathbb{N}$, the set $A((x_i)_{i=1}^{\infty})$ is contained in the union of closed intervals
\begin{equation*} 
\mathcal{U} := \bigcup_{\epsilon_1,\epsilon_2,\ldots,\epsilon_{i_k}\in\{0,1\}} \Big[\sum_{j=1}^{i_k} \epsilon_j x_j , \sum_{j=1}^{i_k} \epsilon_j x_j + r_k\Big], 
\end{equation*}
each of which has length $r_k:=\sum_{j=i_k+1}^{\infty}x_j$.
These intervals have disjoint interiors and appear on the real line in the increasing order with respect to the lexicographical order on the tuples $(\epsilon_1,\ldots,\epsilon_{i_k})$ from $\{0,1\}^{i_k}$. Namely, if $(\epsilon_1,\ldots,\epsilon_{i_k})$ precedes $(\epsilon'_1,\ldots,\epsilon'_{i_k})$ and $1\leqslant \ell\leqslant i_k$ is the smallest index at which these tuples differ (so that $0=\epsilon_\ell<\epsilon'_\ell=1$), then the intervals
{\allowdisplaybreaks
\begin{align*}
I & := \Big[\sum_{j=1}^{i_k} \epsilon_j x_j, \sum_{j=1}^{i_k} \epsilon_j x_j + r_k\Big], \\
I' & := \Big[\sum_{j=1}^{i_k} \epsilon'_j x_j , \sum_{j=1}^{i_k} \epsilon'_j x_j + r_k\Big]
\end{align*}
}
satisfy
{\allowdisplaybreaks
\begin{align*}
\max I & = \sum_{j=1}^{i_k} \epsilon_j x_j + r_k 
\leqslant \sum_{j=1}^{\ell-1} \epsilon_j x_j + \sum_{j=\ell+1}^{i_k} x_j + r_k \\
& = \sum_{j=1}^{\ell-1} \epsilon_j x_j + \sum_{j=\ell+1}^{\infty} x_j 
\stackrel{\eqref{eq:Cantor0}}{\leqslant} \sum_{j=1}^{\ell-1} \epsilon_j x_j + x_\ell \\
& = \sum_{j=1}^{\ell-1} \epsilon'_j x_j + x_\ell 
= \sum_{j=1}^{\ell} \epsilon'_j x_j 
\leqslant \sum_{j=1}^{i_k} \epsilon'_j x_j 
= \min I',
\end{align*}
}
so $I$ is to the left of $I'$ and they can only touch at the endpoints.
The inequality is strict when $\ell=i_k$, since we can then apply \eqref{eq:Cantor1} instead of \eqref{eq:Cantor0}.
When $\ell<i_k$, the inequality is also strict unless the two tuples determining $I$ and $I'$ respectively read
\begin{align*}
& (\epsilon_1,\ldots,\epsilon_{\ell-1},0,1,\ldots,1), \\
& (\epsilon_1,\ldots,\epsilon_{\ell-1},1,\underbrace{0,\ldots,0}_{i_k-\ell}).
\end{align*}
Consequently, each interval from the union $\mathcal{U}$ can touch at most one other interval. We conclude that $A((x_i)_{i=1}^{\infty})$ is, in fact, contained in the union of disjoint closed intervals of length at most $2r_k$.
Since $r_k$ tends to $0$ as $k\to\infty$ (due to the convergence of $\sum_j x_j$), we see that the achievement set cannot contain a non-degenerate interval.
\end{proof}

We would like to point out that condition \eqref{eq:Cantor1} by itself is not sufficient to deduce that the achievement set has empty interior, as the so-called \emph{Cantorval} does have non-empty interior; see the examples constructed in \cite{WS80,GN88}, noting that this suggestive name was invented later.
The Guthrie--Nymann Cantorval \cite{GN88} is defined as the achievement set $A((x_i)_{i=1}^{\infty})$, where
\[ x_i := \begin{cases}
\displaystyle\frac{3}{2^{i+1}} & \text{for $i$ odd},\\[2mm]
\displaystyle\frac{1}{2^{i-1}} & \text{for $i$ even},
\end{cases} \]
while a general Cantorval is then defined as every set homeomorphic to it.

The previous lemma applies in particular to the reciprocals of $2$-lacunary sequences.

\begin{corollary}\label{cor:necess1} 
Finite sums of reciprocal terms of a $2$-lacunary sequence $n_1,n_2,n_3,\ldots$ cannot attain all rational numbers from a non-empty open interval.
\end{corollary}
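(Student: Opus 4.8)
Set $x_i:=1/n_i$. The hypothesis $n_{i+1}\geqslant 2n_i$ makes $(x_i)_{i=1}^{\infty}$ a strictly decreasing sequence of positive reals with $\sum_i x_i$ convergent, and it yields $\sum_{j>i}x_j\leqslant\sum_{k\geqslant 1}2^{-k}x_i=x_i$ for every $i$; thus hypothesis \eqref{eq:Cantor0} of Lemma \ref{lm:Kakeya} holds automatically. My plan is to assume, for contradiction, that $P((1/n_i)_{i=1}^{\infty})\supseteq(\alpha,\beta)\cap\mathbb{Q}$ for some $\alpha<\beta$, and to split into two cases according to whether the strict inequality \eqref{eq:Cantor1} holds for infinitely many indices $i$.

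If \eqref{eq:Cantor1} does hold for infinitely many $i$, I would simply invoke Lemma \ref{lm:Kakeya}: it gives that $A((1/n_i)_{i=1}^{\infty})$ is compact with empty interior. Since $P((1/n_i)_{i=1}^{\infty})\subseteq A((1/n_i)_{i=1}^{\infty})$ and the latter set is closed, I would then get
\[ [\alpha,\beta]=\overline{(\alpha,\beta)\cap\mathbb{Q}}\subseteq\overline{P((1/n_i)_{i=1}^{\infty})}\subseteq A((1/n_i)_{i=1}^{\infty}), \]
contradicting the fact that $A((1/n_i)_{i=1}^{\infty})$ has empty interior.

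If instead \eqref{eq:Cantor1} fails for all but finitely many $i$, then, together with \eqref{eq:Cantor0}, this forces $x_i=\sum_{j>i}x_j$ for at least one index $i=:N$. Inspecting the equality case of the estimate $\sum_{j>i}x_j\leqslant\sum_{k\geqslant1}2^{-k}x_i$ shows that then $n_{N+k}=2^{k}n_N$ for every $k\geqslant 0$, i.e.\ the sequence has a purely geometric tail of ratio $2$. I would then argue about denominators: for any finite $F\subseteq\mathbb{N}$, in the sum $\sum_{i\in F}1/n_i$ the terms with $i<N$ have denominator dividing $L:=\operatorname{lcm}(n_1,\dots,n_N)$, while each term with $i\geqslant N$ equals $2^{N-i}/n_N$, so the whole sum has a common denominator of the form $2^{k}L$. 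Hence every element of $P((1/n_i)_{i=1}^{\infty})$ is a rational whose reduced denominator divides $2^{k}L$ for some $k\geqslant 0$, and in particular is divisible by no odd prime outside the finitely many primes dividing $L$. Picking an odd prime $q\nmid L$, no element of $P((1/n_i)_{i=1}^{\infty})$ has reduced denominator divisible by $q$; but rationals in lowest terms whose denominator is a power of $q$ are dense in $\mathbb{R}$, so one of them lies in $(\alpha,\beta)$ --- a contradiction.

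The only genuinely new ingredient is the second case: it is precisely the geometric-ratio-$2$ tails (powers of $2$ being the prototype) that make \eqref{eq:Cantor1} fail and prevent a direct appeal to Lemma \ref{lm:Kakeya}. Once such a tail is isolated, the main obstacle is merely the bookkeeping needed to confirm that combining the finitely many ``head'' reciprocals $1/n_1,\dots,1/n_{N-1}$ with the geometric tail cannot introduce any new odd prime into a reduced denominator, which is a routine common-denominator computation.
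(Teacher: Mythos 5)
Your proposal is correct and follows essentially the same route as the paper: verify \eqref{eq:Cantor0} from $2$-lacunarity, split on whether \eqref{eq:Cantor1} holds, invoke Lemma~\ref{lm:Kakeya} when it does, and when it fails deduce a geometric-ratio-$2$ tail and then rule out rationals whose reduced denominator involves a prime not dividing the finitely many ``head'' terms. The paper phrases the second case by choosing a prime larger than $n_{i_0}$, whereas you choose an odd prime not dividing $L=\operatorname{lcm}(n_1,\dots,n_N)$, but this is the same denominator obstruction stated slightly differently.
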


\begin{proof}
Let us specialize Lemma \ref{lm:Kakeya} to $x_i=1/n_i$.
The $2$-lacunarity of $(n_i)_{i=1}^{\infty}$ gives 
\[ x_j = \frac{1}{n_j} \leqslant \frac{1}{2^{j-i}n_i} = 2^{i-j} x_i \]
for all $j>i$, so that
\begin{equation}\label{eq:Cantor2}
\sum_{j=i+1}^{\infty} x_j \leqslant \sum_{j=i+1}^{\infty} 2^{i-j} x_i = x_i
\end{equation}
for every index $i$, verifying \eqref{eq:Cantor0}.
If \eqref{eq:Cantor1} is satisfied as well, then we claim that the corresponding set of finite sums $P((x_i)_{i=1}^{\infty})$ cannot contain all rational numbers from a non-degenerate open interval $(\alpha,\beta)$. Indeed, by the density of the rational numbers, the closed set $A((x_i)_{i=1}^{\infty})\supset P((x_i)_{i=1}^{\infty})$ would then contain $[\alpha,\beta]$, contradicting Lemma \ref{lm:Kakeya}.
Thus, suppose that \eqref{eq:Cantor1} fails.
This implies that we must actually have equality in \eqref{eq:Cantor2} for all sufficiently large indices $i$, say all $i \geqslant i_0$. This in turn implies that $n_j$ is exactly equal to $2^{j-i_0}n_{i_0}$ for all $j \geqslant i_0$. In particular, for all primes $p > n_{i_0}$, $p$ does not divide any element $n_i$ of the sequence. We therefore see that $P((x_i)_{i=1}^{\infty})$ does not contain any fraction whose denominator is divisible by a prime larger than $n_{i_0}$, so that it certainly does not contain all rational numbers from a non-empty open interval.
\end{proof}

We remark that Corollary \ref{cor:necess1} already confirms part \ref{itc} of Theorem \ref{thm01}, which turned out to be the easiest part of our main result.

Let us return to a general sequence $x_1>x_2>x_3>\cdots>0$ such that $\sum_{i=1}^{\infty} x_i<\infty$. Kakeya \cite{Kak14a,Kak14b} furthermore observed that $A((x_i)_{i=1}^{\infty})$ consists of a single real-line segment if and only if
\[ x_i \leqslant \sum_{j=i+1}^{\infty} x_j \quad \text{for every } i\in\mathbb{N}; \]
see \cite{BFP13}.
Moreover, when the latter condition is satisfied, the achievement set is precisely the ``largest possible'' interval, i.e., $[0,\sum_{i=1}^{\infty} x_i]$.
The necessity is, this time, especially simple: if $x_k>\sum_{j=k+1}^{\infty}x_j$ for some index $k$, then $A((x_i)_{i=1}^{\infty})$ has a ``gap'' $(\sum_{j=k+1}^{\infty}x_j,x_k)$ and it definitely cannot be an interval.
Specializing this to $x_i=1/n_i$ and recalling that the rational numbers are dense immediately yields the following corollary. 

\begin{corollary}\label{cor:necess2}
If $(n_i)_{i=1}^{\infty}$ is a strictly increasing sequence of positive integers such that $\sum_{i=1}^{\infty}1/n_i<\infty$ and the associated set $P((1/n_i)_{i=1}^{\infty})$ fills in all rational points from the interval
$[0,\sum_{i=1}^{\infty}1/n_i)$,
then it necessarily satisfies
\begin{equation}\label{eq:neceint}
\frac{1}{n_i} \leqslant \sum_{j=i+1}^{\infty} \frac{1}{n_j} \quad \text{for every } i\in\mathbb{N}.
\end{equation}
\end{corollary}


\section{General sufficient conditions}
\label{sec:general_suff}

Inequality \eqref{eq:neceint} can also motivate the formulation of sufficient conditions on $(n_i)_{i=1}^{\infty}$ such that the sums from \eqref{eq:theset} attain all rational values from some non-empty open interval.
Just as in the previous section, we begin with an observation which is valid for all real numbers (not just rational ones), where we would furthermore like to point out the resemblance to some of Graham's results, in particular \cite[Lemma 1]{G64b} and \cite[Lemma 1]{G64}.

\begin{lemma}\label{lm:reals}
Suppose that $x_1>x_2>\cdots>x_m>0$ are real numbers satisfying 
\[ x_i \leqslant \sum_{j=i+1}^{m}x_j + x_m \]
for all $i$ with $1 \le i \le m-1$. Then the set
\[ \Big\{ \sum_{i\in T}x_i : T\subseteq\{1,2,\ldots,m\} \Big\} \]
$x_m$-densely fills in the segment
$[0,\sum_{i=1}^{m}x_i]$,
i.e., it divides this segment into sub-intervals of length at most $x_m$.
\end{lemma}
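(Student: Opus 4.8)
The plan is to induct on $m$ and sort the achievable sums into increasing order, showing that consecutive sums never differ by more than $x_m$. For $m=1$ the set is $\{0,x_1\}$ and the two points differ by exactly $x_1=x_m$, so the claim holds. For the inductive step, consider the $m$ real numbers $x_1>\cdots>x_m>0$ satisfying the hypothesis. I would split the collection of subset sums according to whether the index $m$ is used or not. The sums avoiding $m$ form the set $\big\{\sum_{i\in S}x_i : S\subseteq\{1,\ldots,m-1\}\big\}$, and the sums using $m$ are obtained from these by adding $x_m$. The key point is that $x_1>\cdots>x_{m-1}$ still satisfy the hypothesis of the lemma with $m$ replaced by $m-1$: for $i\leqslant m-2$ we have $x_i\leqslant \sum_{j=i+1}^{m}x_j+x_m=\big(\sum_{j=i+1}^{m-1}x_j+x_{m-1}\big)+(x_m-x_{m-1}+x_m)$, and since $x_m<x_{m-1}$ this is not quite the right direction — so instead I would use the cleaner bound $x_i\leqslant\sum_{j=i+1}^{m-1}x_j+x_{m-1}$ which follows because the hypothesis for index $i$ gives $x_i\leqslant\sum_{j=i+1}^{m}x_j+x_m$ and one checks $\sum_{j=i+1}^{m}x_j+x_m\leqslant\sum_{j=i+1}^{m-1}x_j+x_{m-1}$ is equivalent to $x_m+x_m\leqslant x_{m-1}$, which need not hold. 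The correct observation is simply that the hypothesis for $m-1$ numbers requires $x_i\leqslant\sum_{j=i+1}^{m-1}x_j+x_{m-1}$ for $i\leqslant m-2$, and this is exactly the hypothesis for the original index $i$ after noting $x_m\leqslant x_{m-1}\leqslant\sum_{j=i+1}^{m-1}x_j+x_{m-1}$ trivially bounds nothing — I will need to be more careful here and this is the main obstacle (see below).

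Granting the inductive reduction, by the induction hypothesis the set $B:=\big\{\sum_{i\in S}x_i:S\subseteq\{1,\ldots,m-1\}\big\}$ divides $[0,\sum_{i=1}^{m-1}x_i]$ into subintervals of length at most $x_{m-1}$. The full set of sums is $B\cup(B+x_m)$. Listing $B=\{0=b_0<b_1<\cdots<b_N=\sum_{i=1}^{m-1}x_i\}$, I would argue that $B\cup(B+x_m)$, sorted, has consecutive gaps at most $x_m$. Within $B+x_m$ alone the gaps are the same as in $B$, hence at most $x_{m-1}$; but $x_{m-1}>x_m$, so this is where the hypothesis $x_{m-1}\leqslant\sum_{j=m}^{m}x_j+x_m=2x_m$... no, the hypothesis at $i=m-1$ reads $x_{m-1}\leqslant x_m+x_m=2x_m$. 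So gaps in $B$ (and in $B+x_m$) are at most $x_{m-1}\leqslant 2x_m$, and I must show the interleaving with the shifted copy cuts every such gap. Between $b_k$ and $b_{k+1}$ (gap $\leqslant 2x_m$), I claim at least one point of the form $b_j+x_m$ lands strictly inside, because $b_j+x_m$ for the largest $j$ with $b_j+x_m\leqslant b_{k+1}$ satisfies $b_j+x_m\geqslant b_{k+1}-x_m\geqslant b_k$ (using that $B+x_m$ has gaps $\leqslant 2x_m$ and covers up to $\sum x_i + x_m\geqslant b_{k+1}$ provided $b_{k+1}$ is not too large), splitting $[b_k,b_{k+1}]$ into two pieces each of length at most $x_m$. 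The endpoint/boundary cases where $b_{k+1}$ exceeds $\sum_{i=1}^{m-1}x_i$ or where $B+x_m$ does not reach that far need separate but routine checking.

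The step I expect to be genuinely delicate is the bookkeeping in the previous paragraph: one must verify that the interleaved sorted union of $B$ and $B+x_m$ really does have all gaps $\leqslant x_m$, which requires simultaneously knowing (i) the gaps within $B$ are $\leqslant x_{m-1}\leqslant 2x_m$, (ii) the two copies $B$ and $B+x_m$ overlap enough — precisely, that $x_m\leqslant\sum_{i=1}^{m-1}x_i$, which follows from the hypothesis at $i=m-1$ telescoped, or more directly from $x_{m-1}\leqslant 2x_m$ together with $x_{m-1}>x_m$ giving $x_m<x_{m-1}\leqslant\sum_{j=m-1}^{m-1}x_j + x_m$, hence trivially, while $x_m\leqslant x_{m-1}\leqslant\sum_{i=1}^{m-1}x_i$ as long as $m\geqslant 2$ — and (iii) that no gap near the right end of $[0,\sum_{i=1}^m x_i]$ is missed. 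An alternative, possibly cleaner route avoiding the two-copy interleaving is a direct greedy argument: given any target $t\in[0,\sum_{i=1}^m x_i]$, build $S$ by scanning $i=1,2,\ldots,m$ and including $i$ in $S$ precisely when doing so keeps the running partial sum $\leqslant t$; the hypothesis $x_i\leqslant\sum_{j=i+1}^m x_j + x_m$ guarantees that whenever we are forced to skip index $i$ we are already within $x_i\leqslant$ (and ultimately within $x_m$) of $t$ from below, and one shows the final sum is within $x_m$ of $t$. I would likely present this greedy version as the main argument, as it sidesteps the sorting bookkeeping entirely, and reserve the inductive picture as intuition.
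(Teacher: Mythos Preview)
Your induction removes the wrong element. Splitting according to whether $m\in S$ requires the truncated sequence $x_1>\cdots>x_{m-1}$ to satisfy the lemma's hypothesis with density $x_{m-1}$, and you correctly notice that this need not hold: from $x_i\leqslant\sum_{j=i+1}^{m}x_j+x_m=\sum_{j=i+1}^{m-1}x_j+2x_m$ one cannot deduce $x_i\leqslant\sum_{j=i+1}^{m-1}x_j+x_{m-1}$, since the hypothesis at $i=m-1$ only gives $x_{m-1}\leqslant 2x_m$, the wrong direction. This is a real obstruction, not just bookkeeping: with $(x_1,x_2,x_3)=(3.4,\,1.5,\,1)$ the original hypothesis holds, but the subset sums of $\{x_1,x_2\}$ are $0,\,1.5,\,3.4,\,4.9$, which are \emph{not} $x_2$-dense. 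Your interleaving argument therefore starts from a false premise (``Granting the inductive reduction''), and the subsequent gap analysis, which assumes gaps in $B$ are at most $x_{m-1}$, does not apply.

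The paper's fix is a one-line change: split according to whether $1\in S$ instead. The tail $x_2>\cdots>x_m$ inherits the hypothesis verbatim (the conditions for $i=2,\ldots,m-1$ are a subset of the original ones) and the density parameter remains $x_m$, so the induction hypothesis says both $\{\sum_{i\in S}x_i:S\subseteq\{2,\ldots,m\}\}$ and its translate by $x_1$ are $x_m$-dense in their respective intervals $[0,\sum_{i\geqslant 2}x_i]$ and $[x_1,\sum_{i\geqslant 1}x_i]$. The only thing to check is that these two intervals are not separated by more than $x_m$, and that is exactly the hypothesis at $i=1$. No interleaving analysis is needed.

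Your greedy alternative, incidentally, is a valid independent proof if carried out: taking the largest index $i\notin S$ (or $i=m$ if $m\notin S$) and using the hypothesis at that $i$ shows the greedy sum lands in $[t-x_m,t]$ for every target $t$. But as written it is only a sketch, and the inductive argument above is shorter.
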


\begin{proof}
We use induction on $m\in\mathbb{N}$. The base case $m=1$ is trivial because the segment is just $[0,x_1]$ in that case.
So assume $m\geqslant 2$, and let us apply the induction hypothesis to the sequence $x_2>\cdots>x_m$. We then conclude that
\begin{itemize}
\item the sums $\sum_{i\in T}x_i$, $T\subseteq\{2,\ldots,m\}$, $x_m$-densely fill in $[0,\sum_{i=2}^{m}x_i]$ and
\item the sums $x_1+\sum_{i\in T}x_i$, $T\subseteq\{2,\ldots,m\}$, $x_m$-densely fill in $[x_1,x_1+\sum_{i=2}^{m}x_i]$.
\end{itemize}
These two intervals either overlap or they are separated by at most $x_m$, as we assumed $x_1 \leqslant \sum_{i=2}^{m}x_i + x_m$. This completes the induction step.
\end{proof}

We are now in a position to formulate rather general sufficient conditions enabling the set\linebreak $P((1/n_i)_{i=1}^{\infty})$ to contain all rational numbers from $[0,\sum_{i=1}^{\infty}1/n_i)$. 

\begin{proposition}\label{prop:general}
Suppose that $n_1<n_2<n_3<\cdots$ is a sequence of positive integers such that
\begin{enumerate}[(1)]
\item\label{it2} every positive integer is a divisor of $n_i$ for some $i\in\mathbb{N}$.
\end{enumerate}
Also let $m_1<m_2<m_3<\cdots$ be an infinite sequence of distinguished indices such that
\begin{enumerate}[resume*]
\item\label{it3} for every $k\in\mathbb{N}$ the integer $n_{m_k}$ is divisible by all $n_j$, $1\leqslant j<m_k$, and
\item\label{it4} one has the inequality\footnote{Here $m_0\leqslant i$ should be interpreted as a void condition.}
\begin{equation}\label{eq:gensuff}
\frac{1}{n_i} \leqslant \sum_{j=i+1}^{m_k} \frac{1}{n_j} + \frac{1}{n_{m_k}} \quad\text{for all }k\in\mathbb{N}\text{ and }m_{k-1}\leqslant i<m_k.
\end{equation}
\end{enumerate}
Then $P((1/n_i)_{i=1}^{\infty})=[0,\sum_{i=1}^{\infty}1/n_i)\cap\mathbb{Q}$.
If, additionally,
\begin{equation}\label{eq:gensuff1}
\frac{1}{n_{m_k}} < \sum_{j=m_k+1}^{\infty} \frac{1}{n_j} \quad\text{holds for infinitely many }k\in\mathbb{N},
\end{equation}
then every rational number from the open interval $(0,\sum_{i=1}^{\infty}1/n_i)$ equals $\sum_{i\in T}1/n_i$ for infinitely many finite sets $T\subset\mathbb{N}$.
\end{proposition}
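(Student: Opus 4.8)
\emph{Proof plan.}
The inclusion ``$\subseteq$'' is immediate: every $\sum_{i\in S}1/n_i$ with $S$ finite is a non-negative rational, and it is strictly below $\sum_{i=1}^{\infty}1/n_i$ because infinitely many positive terms are left out (when the series diverges this is trivial, with $[0,\sum_{i=1}^{\infty}1/n_i)$ interpreted as $[0,\infty)$ throughout). The content lies in the reverse inclusion, and the engine is Lemma \ref{lm:reals}. The plan is to establish the following \emph{finitary claim}: for every $k\in\mathbb{N}$, every rational of the form $c/n_{m_k}$ with $c$ a non-negative integer and $c/n_{m_k}\leqslant\sum_{i=1}^{m_k}1/n_i$ can be written as $\sum_{i\in S}1/n_i$ for some $S\subseteq\{1,\dots,m_k\}$.

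To prove the claim, observe first that by condition \ref{it3} each $n_j$ with $1\leqslant j\leqslant m_k$ divides $n_{m_k}$, so each $1/n_j$ — and therefore each subset sum $\sum_{i\in S}1/n_i$ with $S\subseteq\{1,\dots,m_k\}$ — is a non-negative integer multiple of $1/n_{m_k}$; in particular $\sum_{i=1}^{m_k}1/n_i=c^{\ast}/n_{m_k}$ for some positive integer $c^{\ast}$. Next, the hypothesis of Lemma \ref{lm:reals} holds for $x_i:=1/n_i$, $i=1,\dots,m_k$, namely
\[ \frac{1}{n_i}\leqslant\sum_{i<j\leqslant m_k}\frac{1}{n_j}+\frac{1}{n_{m_k}}\qquad\text{for all }1\leqslant i<m_k. \]
For $i$ in the last block, $m_{k-1}\leqslant i<m_k$, this is precisely \eqref{eq:gensuff}; for $i$ in an earlier block $m_{\ell-1}\leqslant i<m_\ell$ with $\ell<k$, one chains \eqref{eq:gensuff} upward, applying it at $i$ and then at $m_\ell,m_{\ell+1},\dots,m_{k-1}$ in succession, each step enlarging the summation range and lowering the trailing index, until the right-hand side has become $\sum_{i<j\leqslant m_k}1/n_j+1/n_{m_k}$. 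Lemma \ref{lm:reals} now tells us that the subset sums $1/n_{m_k}$-densely fill $[0,c^{\ast}/n_{m_k}]$; being non-negative integer multiples of $1/n_{m_k}$ that include the endpoints $0$ and $c^{\ast}/n_{m_k}$, they can skip no multiple $c/n_{m_k}$ with $0\leqslant c\leqslant c^{\ast}$ (skipping one would leave a gap of at least $2/n_{m_k}$), which is exactly the finitary claim.

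Granting the claim, the reverse inclusion follows quickly. Given $q=a/b\in[0,\sum_{i=1}^{\infty}1/n_i)\cap\mathbb{Q}$ with $b\geqslant 1$, use condition \ref{it2} to fix $i_0$ with $b\mid n_{i_0}$, and then pick $k$ so large that $m_k\geqslant i_0$ and $\sum_{i=1}^{m_k}1/n_i>q$. By condition \ref{it3}, $b\mid n_{i_0}\mid n_{m_k}$, so $q$ is a non-negative integer multiple of $1/n_{m_k}$ lying in $[0,\sum_{i=1}^{m_k}1/n_i]$, hence $q=\sum_{i\in S}1/n_i$ for some $S\subseteq\{1,\dots,m_k\}$. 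For the final assertion, fix $q\in(0,\sum_{i=1}^{\infty}1/n_i)\cap\mathbb{Q}$, write $q=a/b$, and let $M\in\mathbb{N}$ be arbitrary; it suffices to produce a finite representing set $S$ with $\max S>M$. Choose $k$ satisfying \eqref{eq:gensuff1}, i.e.\ $1/n_{m_k}<\sum_{j>m_k}1/n_j$, and also large enough that $m_k>M$, $b\mid n_{m_k}$, $1/n_{m_k}<q$, and $\sum_{i=1}^{m_k}1/n_i>q$; all of these hold for every sufficiently large $k$ among the infinitely many satisfying \eqref{eq:gensuff1}. Split $q=(q-1/n_{m_k})+1/n_{m_k}$. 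The first summand is a non-negative integer multiple of $1/n_{m_k}$ and is at most $q<\sum_{i=1}^{m_k}1/n_i$, so the finitary claim gives $q-1/n_{m_k}=\sum_{i\in S'}1/n_i$ for some $S'\subseteq\{1,\dots,m_k\}$. For the second summand, note that the shifted sequence $(n_j)_{j>m_k}$ with distinguished indices $m_{k+1}<m_{k+2}<\cdots$ again satisfies conditions \ref{it2}, \ref{it3}, \ref{it4} — for \ref{it2} one uses $d\mid n_{i'}\mid n_{m_{k+1}}$ whenever $i'\leqslant m_k$, while \ref{it3} and \ref{it4} for the shifted sequence are just restrictions of the originals to sub-ranges — so the already-proven equality applies to it: $P((1/n_j)_{j>m_k})=[0,\sum_{j>m_k}1/n_j)\cap\mathbb{Q}$. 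As $1/n_{m_k}$ is a positive rational in $[0,\sum_{j>m_k}1/n_j)$ by the choice of $k$, we get $1/n_{m_k}=\sum_{i\in S''}1/n_i$ for some non-empty finite $S''\subseteq\{m_k+1,m_k+2,\dots\}$. Then $S:=S'\cup S''$ is finite, the union is disjoint, $\sum_{i\in S}1/n_i=q$, and $\max S>m_k>M$. Since $M$ was arbitrary, $q$ admits representing sets of unbounded maximum, hence infinitely many of them.

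The only genuinely technical point is the chaining of \eqref{eq:gensuff} across several blocks used to verify the hypothesis of Lemma \ref{lm:reals} for the whole initial segment $\{1,\dots,m_k\}$ (and the entirely parallel verification that the shifted sequence inherits condition \ref{it4}, with its block structure intact); everything else is routine bookkeeping.
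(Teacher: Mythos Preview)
Your proof is correct and follows essentially the same approach as the paper: both verify the hypothesis of Lemma~\ref{lm:reals} on the initial segment $\{1,\dots,m_k\}$ by telescoping \eqref{eq:gensuff} across blocks, combine the resulting $(1/n_{m_k})$-density with the divisibility from \ref{it3} to hit every multiple of $1/n_{m_k}$, and for the ``infinitely many representations'' part split $q=(q-1/n_{m_k})+1/n_{m_k}$ and apply the first part to the tail sequence. Your packaging of the core step as a separate ``finitary claim'' and your explicit check that the shifted sequence inherits conditions \ref{it2}--\ref{it4} are minor organizational differences, not a different route.
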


Let us remark that we allow the series $\sum_i 1/n_i$ to diverge in which case its sum and the sums appearing in \eqref{eq:gensuff1} need to be interpreted as $\infty$.

\begin{proof}
Assume that \ref{it2}--\ref{it4} hold and let us prove the first claim.
Take a rational number $0\leqslant q<\sum_{i=1}^{\infty}1/n_i$. By the assumptions \ref{it2} and \ref{it3}, there exists a sufficiently large $K\in\mathbb{N}$ such that the denominator of $q$ (in its least terms) divides $n_{m_K}$, while $q\leqslant \sum_{i=1}^{m_K}1/n_i$.
The idea is to apply Lemma \ref{lm:reals} to $m:=m_K$ and the unit fractions $x_i:=1/n_i$, $i=1,\ldots,m$.
To see that the inequality from Lemma \ref{lm:reals} is satisfied, take any $1\leqslant i<m$ and let $k\leqslant K$ be such that $m_{k-1}\leqslant i<m_k$. Condition \eqref{eq:gensuff} then gives
\[ x_i \leqslant \sum_{j=i+1}^{m_k} x_j + x_{m_k} \]
and
\[ x_{m_\ell} \leqslant \sum_{j=m_\ell+1}^{m_{\ell+1}} x_j + x_{m_{\ell+1}} \]
for $\ell=k,k+1,\ldots,K-1$. Summing all these inequalities yields
\[ x_i \leqslant \sum_{j=i+1}^{m} x_j + x_m, \]
which is the desired inequality.
On the one hand, Lemma \ref{lm:reals} now claims that the finite sums
\[ \sum_{i\in T} \frac{1}{n_i}, \quad T\subseteq\{1,2,\ldots,m\}, \]
$(1/n_m)$-densely fill in the segment $[0,\sum_{i=1}^{m}1/n_i]$. On the other hand, by assumption \ref{it3}, all these sums are integer multiples of $1/n_m$.
We conclude that the sums actually fill in the whole set $[0,\sum_{i=1}^{m}1/n_i]\cap(1/n_m)\mathbb{Z}$ and, in particular, at least one of them is equal to $q$.

Now we additionally assume \eqref{eq:gensuff1} and turn to the second claim. Take a rational number $0<q<\sum_{i=1}^{\infty}1/n_i$ and note that it suffices to represent it as $\sum_{i\in T}1/n_i$, where $\max T$ is larger than any prescribed $N\in\mathbb{N}$. Afterwards we obtain infinitely many representations of $q$ by increasing $N$. Take an index $k\in\mathbb{N}$ satisfying \eqref{eq:gensuff1} which is sufficiently large such that $m_k\geqslant  N$,
\[ \frac{1}{n_{m_k}} \leqslant q\leqslant \sum_{i=1}^{m_k}\frac{1}{n_i}, \]
and for which the denominator of $q$ divides $n_{m_k}$. Denote $m=m_k$.
Precisely the same reasoning as in the previous part of the proof, but applied to the rational number $q-1/n_m$, gives
\[ q - \frac{1}{n_m} = \sum_{i\in T'} \frac{1}{n_i} \]
for some $T'\subseteq\{1,2,\ldots,m\}$.
Now, \eqref{eq:gensuff1} and our choice of $k$ guarantee
$1/n_m < \sum_{i=1}^{\infty} 1/n_{m+i}$, so applying the first claim to the sequence
\[ n_{m+1} < n_{m+2} < \cdots \]
and the distinguished indices
\[ m_{k+1} < m_{k+2} < \cdots \]
yields the representation
\[ \frac{1}{n_m} = \sum_{i\in T''} \frac{1}{n_i} \]
for some finite $T''\subseteq\{m+1,m+2,\ldots\}$.
Adding the last two representations we obtain
\[ q = \sum_{i\in T'\cup T''} \frac{1}{n_i} \]
and it remains to note that $\max(T'\cup T'')>m\geqslant N$.
\end{proof}

\begin{remark}\label{rem:cond}
For the purpose of the proof of Theorem \ref{thm01} that will be given in the next section, observe that condition \eqref{eq:gensuff} is automatically satisfied for sequences $n_1<n_2<n_3<\cdots$ such that
\begin{equation}\label{eq:gensuff2}
n_{i+1}\leqslant 2n_i \quad\text{for every } i\in\mathbb{N}. 
\end{equation}
Indeed,
\[ \sum_{j=i+1}^{m_k} \frac{1}{n_j} + \frac{1}{n_{m_k}} 
\geqslant  \frac{1}{n_i} \Big( \sum_{j=i+1}^{m_k} \frac{1}{2^{j-i}} + \frac{1}{2^{m_k-i}} \Big) = \frac{1}{n_i}. \]
If furthermore the strict inequality
\begin{equation}\label{eq:gensuff3}
n_{i+1}< 2n_i \quad\text{holds for infinitely many } i\in\mathbb{N},
\end{equation}
then \eqref{eq:gensuff1} is satisfied too, since
\[ \sum_{j=m_k+1}^{\infty} \frac{1}{n_j} 
> \frac{1}{n_{m_k}} \sum_{j=m_k+1}^{\infty} \frac{1}{2^{j-m_k}} = \frac{1}{n_{m_k}} \]
will be a strict inequality as well, due to the fact that $n_j<2^{j-m_k}n_{m_k}$ then holds for all sufficiently large $j$.
\end{remark}

\section{Proof of Theorem \ref{thm01}}
\label{sec:thm01}

In order to apply Proposition \ref{prop:general}, we need to construct appropriate sequences $n_1,n_2,\ldots$ and $m_1,m_2,\ldots$. As we need $n_{m_k}$ to be divisible by $n_j$ for all indices $j < m_k$, while the inequality $n_{i+1}/n_i \geqslant \lambda$ must hold for all $i \in \mathbb{N}$, the following lemma will come in handy. 

\begin{lemma}\label{lm:divisors}
For every $\lambda\in(1,2)$ and every $Q\in\mathbb{N}$, there exists a positive integer $N$ divisible by $Q$ and for which some subsequence of its divisors
\[ 1=d_{0}<d_{1}<d_{2}<\cdots<d_{M-1}<d_{M}=N \] 
satisfies the inequality
\[ \lambda \leqslant \frac{d_{j+1}}{d_{j}} \leqslant 2 \]
for all $j$ with $0\leqslant j\leqslant M-1$.
\end{lemma}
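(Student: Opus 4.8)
The plan is to build $N$ as a product of small prime powers whose partial products form the required chain of divisors. First I would reduce to the case where $Q$ is itself already "nicely factored": given arbitrary $Q$, it suffices to produce an $N$ divisible by $Q$, so I would start the construction by laying down the prime factorization $Q = p_1^{a_1}\cdots p_r^{a_r}$ and insist that $N$ be a multiple of $Q$. The key idea is that multiplying a current divisor $d$ by a prime $p$ increases it by a factor $p$, which for $p=2$ is exactly the upper bound $2$ and for larger primes overshoots; the remedy for overshooting is to interpolate with powers of $2$. Concretely, since $\lambda<2$, I can choose $L\in\mathbb{N}$ large enough that $2^{L}\geqslant$ (something dominating the largest prime dividing $Q$, and also large enough to make the forthcoming ceiling/interpolation arguments work, e.g. $2^{L-1}\geqslant p_r$); then whenever I want to multiply the running divisor by a prime power $p^{a}$, I will instead first multiply by $2^{L}$ a suitable number of times — each such step is a ratio of exactly $2$, hence admissible — and only afterwards multiply by $p^{a}$, at which point the ratio $p^{a}$ is compensated by the fact that I can "spend" the accumulated factor of $2^{L}$ in intermediate divisors.

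More carefully, the cleanest route is probably this. Fix $\lambda\in(1,2)$ and pick $t\in\mathbb{N}$ with $\lambda^{t}\leqslant 2$, equivalently $\lambda\leqslant 2^{1/t}$. The claim reduces to showing: for any prime $p$ and exponent $a$, the single "jump" from a divisor $d$ to $dp^{a}$ can be subdivided into steps each of ratio in $[\lambda,2]$, provided $d$ is already divisible by a sufficiently large power of $2$, say $2^{s}$ with $2^{s}\geqslant p^{a}$. Indeed, between $d$ and $dp^{a}$ we can insert the divisors $d\cdot\lceil \lambda^{\ell}\rceil$ for $\ell=1,2,\ldots$ up to the point where $\lceil\lambda^{\ell}\rceil$ first reaches or exceeds $p^{a}$ — wait, these need to be genuine divisors of $dp^{a}$, so instead I would insert $d\cdot 2^{i}$ for $i=1,\ldots,s$ (ratios exactly $2$, each dividing $dp^a$ because $2^s\mid d$ forces $2^s\mid$ no — rather $d2^i \mid d2^s$ and $d2^s \mid d p^a$ once we also arrange $2^s \mid p^a\cdot(\text{stuff})$...). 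The technically smoothest formulation: I will instead grow $N$ as $\prod$ of powers of $2$ and of the odd primes dividing $Q$, and carry along the invariant that every prefix product is a divisor of $N$, interleaving one multiplication by an odd prime power with a block of multiplications by $2$, the block being long enough that the "big" ratio $p^a$ is itself bounded by $2$ after we realise it as a telescoping chain $d\to 2d\to 4d\to\cdots\to 2^{\lceil\log_2 p^a\rceil}d$ and then pass to $d\,p^a$ in one step of ratio $p^a/2^{\lfloor\log_2 p^a\rfloor}\in[1,2)$. Since $\lambda<2$ is fixed, by inserting enough extra powers of $2$ at the very start (a long pure-doubling initial segment $1,2,4,\ldots,2^{L}$) we can also guarantee the lower bound $\lambda\leqslant d_{j+1}/d_j$ is never violated: a ratio could only fall below $\lambda$ if it were close to $1$, and the only ratios we ever use are exactly $2$, exactly a prime $p\leqslant$ (the largest prime factor of $Q$), and one "correction" ratio of the form $p^a/2^{\lfloor\log_2 p^a\rfloor}\in[1,2)$ per odd prime — and this last one is the only troublemaker.

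I expect the main obstacle to be precisely handling that correction ratio: when I realise a jump by an odd prime power $p^{a}$ via doublings plus one final multiplication, the final ratio $p^{a}/2^{\lfloor \log_2(p^a)\rfloor}$ lies in $[1,2)$ but can be arbitrarily close to $1$ (e.g. $p=2^k+1$), which would violate the lower bound $\lambda\leqslant d_{j+1}/d_j$. The fix, which I would implement, is not to take the final step as that small ratio in isolation but to \emph{absorb} it into an adjacent doubling: replace the pair of consecutive ratios $2$ then $p^a/2^{\lfloor\log_2 p^a\rfloor}$ (product $2p^a/2^{\lfloor\log_2 p^a\rfloor}\in[2,4)$) by a single step if that product is $\leqslant 2$, or split it as two steps of equal-ish ratio $\sqrt{\text{product}}\in[\sqrt2, 2)\subseteq[\lambda,2)$ — more carefully, since the product lies in $[2,4)$ and $\sqrt{4}=2$, $\sqrt{2}>\lambda^{?}$... here one genuinely needs $\lambda<\sqrt{2}$ unless one batches several corrections together. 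The robust resolution for all $\lambda\in(1,2)$: \emph{batch} the corrections. Defer every odd-prime correction ratio to the end, multiply them all together into one number $C=\prod p_i^{a_i}/2^{\lfloor\log_2 p_i^{a_i}\rfloor}\cdot(\text{stuff})\geqslant$ something; more simply, since we are totally free in how large $N$ is, just take $N$ to be $Q\cdot 2^{J}$ for enormous $J$ and order its divisors so that \emph{almost all} steps are pure doublings (ratio $2$) and only finitely many "correction" steps occur, then \emph{merge each correction step with enough neighbouring doublings} so that the merged ratio lands in $[\lambda,2]$: a product of one correction ratio $c\in(1,2)$ with $k$ doublings is $c\cdot 2^{k}$, and we can choose to split $c\cdot 2^k$ into $k'$ nearly-equal factors with $k'=\lceil k\log 2 + \log c)/\log\lambda\rceil$-ish, each in $[\lambda,2]$, because $\lambda\leqslant 2$. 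This batching/merging step is where the real care is needed, and I would present it as the technical heart of the proof; everything else (divisibility of the prefix products, the initial doubling run, divisibility of $N$ by $Q$) is routine bookkeeping.
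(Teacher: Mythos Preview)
Your proposal correctly isolates the real difficulty --- after interleaving doublings with a jump by an odd prime power $p^a$, the residual ratio $p^a/2^{\lfloor\log_2 p^a\rfloor}\in[1,2)$ can fall below $\lambda$ --- but the fix you sketch does not work. When you ``merge'' a correction ratio $c$ with $k$ adjacent doublings and then ``split $c\cdot 2^k$ into $k'$ nearly-equal factors'', the intermediate values in that refined chain are in general not divisors of $N$ (they need not even be integers), so you no longer have a chain of divisors. And the defect is not just cosmetic: with $N=Q\cdot 2^J$ the approach cannot be repaired at all. Take $Q=5$ and $\lambda=1.9$. Every divisor of $N=5\cdot 2^J$ is of the form $2^i$ or $5\cdot 2^i$, so any step in a divisor chain that passes from the first type to the second has ratio $5/2^k$ for some $k\geqslant 0$; but $5/2=2.5>2$ while $5/4=1.25<1.9$, so no admissible step exists and no chain from $1$ to $N$ stays inside $[\lambda,2]$, however large $J$ is.

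The idea you are missing, which the paper exploits, is to allow a \emph{power} of $Q$ in $N$. If $Q$ is not itself a power of $2$, then $\log_2 Q$ is irrational, so the set $\{Q^b/2^a : a,b\in\mathbb{N}\}$ is dense in $(0,\infty)$; in particular one can pick $a,b$ with $\lambda\leqslant Q^b/2^a\leqslant 2$. Taking $N=2^a Q^b$, the chain
\[
1,\ 2,\ 2^2,\ \ldots,\ 2^a,\ Q^b,\ 2Q^b,\ \ldots,\ 2^a Q^b=N
\]
consists entirely of divisors of $N$, and every consecutive ratio equals $2$ except the single middle ratio $Q^b/2^a\in[\lambda,2]$. (When $Q$ is a power of $2$ one simply takes $N=Q$ and the pure doubling chain.) This one-line density argument replaces all of your batching and merging.
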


\begin{proof}
If $Q$ is a power of $2$, i.e., $Q=2^a$ for some $a\in\mathbb{N}\cup\{0\}$, then we can simply take $N:=Q$ together with all its divisors $d_j=2^j$, $0\leqslant j\leqslant a$.
Otherwise, $\log_2 Q$ is an irrational number, which implies that the set
\[ \{ a - b \log_2 Q : a,b\in\mathbb{N} \} \]
is dense in $\mathbb{R}$, so that 
\[ \Big\{ \frac{Q^b}{2^a} : a,b\in\mathbb{N} \Big\} \] 
is dense in $(0,\infty)$. In particular, for a given $1<\lambda<2$, positive integers $a$ and $b$ exist such that $\lambda \leqslant Q^b/2^a\leqslant 2$.
Now we take $N:=2^a Q^b$ and list its desired divisors respectively as
\[ 1, 2^1, 2^2, \ldots, 2^a, Q^b, 2^1 Q^b, 2^2 Q^b, \ldots, 2^a Q^b, \]
proving the lemma.
\end{proof}

Now we are ready to prove our first main result by constructing the desired lacunary sequence.
Let $p_1,p_2,p_3,\ldots$ be the sequence of all primes listed in increasing order.

\begin{proof}[Proof of Theorem \ref{thm01}]
We simultaneously prove parts \ref{ita} and \ref{itb}.
Fix $\lambda\in(1,2)$. For each $k\in\mathbb{N}$, applying Lemma \ref{lm:divisors} to 
$Q = p_1 p_2 \cdots p_k$
provides us with a positive integer $N_k$ divisible by $p_1 p_2 \cdots p_k$, and divisors $d_{k,j}$, $0\leqslant j\leqslant M_k$, with the property
\[ \max\Big\{\lambda,2-\frac{1}{k}\Big\} \leqslant \frac{d_{k,j+1}}{d_{k,j}} \leqslant 2 \quad \text{for } 0\leqslant j\leqslant M_k-1 \]
and such that $d_{k,0}=1$, $d_{k,M_k}=N_k$.
The sequence $n_1,n_2,n_3,\ldots$ and the sequence of indices $m_1, m_2, m_3, \ldots$ are then constructed as follows. We start off with $n_1 = 1$ and $n_2 = 2$, $m_1 = 2$. Now assume, for an integer $k \geqslant  2$, that we have already defined $m_1, m_2, \ldots, m_{k-1}$ and $n_1,n_2,\ldots,n_{m_{k-1}}$ in the previous $k-1$ steps, with $n_{m_{k-1}}$ equal to $N_1N_2\cdots N_{k-1}$ and divisible by $n_j$ for all $j$ with $1 \leqslant j < m_{k-1}$.
We then define $m_k = m_{k-1} + M_k$ and, for $1 \leqslant j \leqslant M_k$, define $n_{m_{k-1} + j} = d_{k,j}n_{m_{k-1}}$. Note that 
\[ n_{m_k} = n_{m_{k-1} + M_k} = d_{k,M_k}n_{m_{k-1}} = N_kn_{m_{k-1}} = N_1N_2\cdots N_{k}.\]
More explicitly, the constructed sequence $(n_i)_{i=1}^{\infty}$ should then look like this:
\begin{align*}
& 1 = n_1, \\
& 2 = n_{m_1} = N_1, \\
& d_{2,1}n_{m_1}, \ d_{2,2}n_{m_1}, \ \ldots, \ d_{2,M_2}n_{m_1} = n_{m_2} = N_1N_2, \\
& d_{3,1}n_{m_2}, \ d_{3,2}n_{m_2}, \ \ldots, \ d_{3,M_3}n_{m_2} = n_{m_3} = N_1N_2N_3, \\
& d_{4,1}n_{m_3}, \ d_{4,2}n_{m_3}, \ \ldots, \ d_{4,M_4}n_{m_3} = n_{m_4} = N_1N_2N_3N_4, \ldots.
\end{align*}

We would like to apply Proposition \ref{prop:general} to this sequence, so we should verify that all conditions are satisfied.
By construction, $n_{m_k} = n_{m_{k-1}} N_k$ is divisible by $n_j$ for all indices $j < m_k$, verifying \ref{it3}.
Moreover, as $n_{m_k}=N_1 N_2 \cdots N_k$ is divisible by $p_1^k p_2^{k-1} \cdots p_{k-1}^2 p_k$, condition \ref{it2} also follows.
Again by construction, we have
\begin{equation}\label{eq:tmpineqe}
\max\Big\{\lambda,2-\frac{1}{k}\Big\} \leqslant \frac{n_{i+1}}{n_i} \leqslant 2
\end{equation}
for every $i\in\mathbb{N}$, where $k$ is the smallest positive integer such that $i<m_k$.
In particular, from \eqref{eq:tmpineqe} we see that $(n_i)_{i=1}^\infty$ is $\lambda$-lacunary and satisfies $\lim_{i\to\infty}n_{i+1}/n_i=2$.
Also, \eqref{eq:gensuff2} holds and Remark \ref{rem:cond} then confirms \ref{it4}.
Note that \ref{it2} and \eqref{eq:gensuff2} together clearly imply \eqref{eq:gensuff3} and thus \eqref{eq:gensuff1} as well, once again by Remark \ref{rem:cond}.
Moreover, note that \eqref{eq:tmpineqe} and $n_1=1$ also give $n_i\leqslant 2^{i-1}$, so that
\[ \sum_{i=1}^{\infty}\frac{1}{n_i} > \sum_{i=1}^{\infty} 2^{-i+1} = 2. \]
The inequality is strict as some of the $n_i$ are clearly not powers of $2$.
Proposition \ref{prop:general} may therefore be applied, and we conclude that for every rational $q \in (0, 2]$ there are infinitely many finite sets $S \subset \mathbb{N}$ with $\sum_{i\in S}1/n_i = q$.

Finally, part \ref{itc} is precisely the content of Corollary \ref{cor:necess1}.
\end{proof}

With the use of integers for which the conclusion of Lemma \ref{lm:divisors} holds, one can turn the above proof into an explicit procedure to create one such infinite sequence $n_1,n_2,\ldots$. To give an example of what such a procedure looks like, let us take $\lambda=3/2$, and work out what the first four steps could be, although there are many options for $N_k$ one may choose. In particular, we do not exactly follow the proof of Lemma \ref{lm:divisors}, but we just take $N_k$ and its divisors to be as simple as possible.

\emph{Step 1.} Start off with $n_1=1$ and $n_2 = N_1 = 2$.

\emph{Step 2.} We set $N_2$ to $2 \cdot 3 = 6$ and take its divisors $2<3<6$. We multiply these divisors by $N_1$, and append the sequence with $4,6,12$.

\emph{Step 3.} Now it is convenient to take $N_3=2^2\cdot 3\cdot 5=60$ with divisors $2<4<6<10<15<30<60$. We append the sequence with these numbers multiplied by $N_1N_2 = 12$, which are $24, 48, 72, 120, 180, 360, 720$.

\emph{Step 4.} This time we can take $N_4=2^5\cdot3^2\cdot 5\cdot 7=10080$ with divisors $2<4<8<16<32<48<72<140<210<315<630<1260<2520<5040<10080$. We append the sequence with these numbers multiplied by $N_1N_2N_3=720$.

After these four steps we arrive at $27$ terms of one possible $(3/2)$-lacunary sequence:
\begin{align*}
& 1, 2, 4, 6, 12, 24, 48, 72, 120, 180, 360, 720, \\
& 1440, 2880, 5760, 11520, 23040, 34560, 51840, 100800, \\
& 151200, 226800, 453600, 907200, 1814400, 3628800, 7257600, \ldots.
\end{align*}
By our construction:
\begin{itemize}
\item sums of reciprocals of the first $2$ terms represent all fractions in $[0,2)$ with denominators dividing $2$;
\item sums of reciprocals of the first $5$ terms represent all fractions in $[0,2]$ with denominators dividing $2^2\cdot 3$;
\item sums of reciprocals of the first $12$ terms represent all fractions in $[0,2]$ with denominators dividing $2^4\cdot 3^2\cdot 5$;
\item sums of reciprocals of the first $27$ terms represent all fractions in $[0,2]$ with denominators dividing $2^9\cdot 3^4\cdot 5^2\cdot 7$.
\end{itemize}
These properties of the first $27$ terms are also easy to verify using rational arithmetic software.


\section{Proof of Theorem \ref{thm02}}
\label{sec:thm02}

In order to prove Theorem \ref{thm02}, the following generalization of Lemma \ref{lm:divisors} will be useful for us.

\begin{lemma}\label{lm:divisors2}
For $\lambda\in(1,2)$ and $Q\in\mathbb{N}$, every given list of positive integers $1=n_1<n_2<\cdots<n_K$ can be extended with positive integers $n_{K+1},\ldots,n_M$ (for some $M\in\mathbb{N}$, $M\geqslant  K$) such that $Q$ and 
\[ 1=n_{1}<n_{2}<\cdots<n_{M-1}<n_{M} \] 
all divide $N:=n_M$, and such that the inequality
\[ \lambda \leqslant \frac{n_{j+1}}{n_{j}} \leqslant 2 \]
is satisfied for all $j$ with $K\leqslant j\leqslant M-1$.
\end{lemma}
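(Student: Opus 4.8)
The plan is to reduce Lemma \ref{lm:divisors2} to the already-established Lemma \ref{lm:divisors}, treating the prescribed initial segment $1=n_1<\cdots<n_K$ as essentially inert and only worrying about what happens after index $K$. First I would invoke Lemma \ref{lm:divisors} with the same $\lambda$ but applied to the integer $Q':=Q\cdot n_2 n_3\cdots n_K$ (the product of $Q$ with all the already-listed terms except $n_1=1$); this produces a positive integer $N$ divisible by $Q'$, together with a chain of its divisors
\[ 1=d_0<d_1<\cdots<d_L=N, \qquad \lambda\leqslant \frac{d_{\ell+1}}{d_\ell}\leqslant 2 \ \text{ for } 0\leqslant \ell\leqslant L-1. \]
Since $n_K\mid Q'\mid N$, the terms of the chain that lie strictly above $n_K$ can be spliced onto the end of the given list. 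Concretely, I would let $\ell_0$ be the smallest index with $d_{\ell_0}\geqslant n_K$, discard $d_0,\dots,d_{\ell_0-1}$, and set $n_{K+r}:=d_{\ell_0+r-1}\cdot(\text{a suitable factor})$ — actually, more cleanly, multiply the whole chain so that it starts exactly at $n_K$, which is the real point of the argument.

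The cleaner route, which I would actually write down, is: apply Lemma \ref{lm:divisors} to $Q'':=Q\cdot\operatorname{lcm}(n_1,\dots,n_K)$ (or just $Q\cdot n_K!$, anything that is a multiple of both $Q$ and $n_K$) to get $N_0$ and a divisor chain $1=e_0<e_1<\cdots<e_L=N_0$ with all ratios in $[\lambda,2]$. Now observe that $n_K\mid Q''\mid N_0$, so $N_0/n_K$ is a positive integer. I would then set $N:=N_0$ and define the extension by $n_{K+r}:=n_K\cdot e_{j(r)}$ where $e_{j(r)}$ runs through those $e_j$ with $n_K e_j> n_K$, i.e. $j\geqslant 1$ — wait, I need $n_{K+r}$ to be strictly increasing from $n_K$ and to end at $N_0$. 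Since $n_K\cdot e_L=n_K N_0 > N_0$ in general, this overshoots. So the genuinely correct bookkeeping is the first approach: rescale is wrong; instead keep the chain $1=e_0<\cdots<e_L=N_0$ as is, locate where $n_K$ falls, and bridge the possibly-large gap between $n_K$ and the next chain element above it using a short geometric-type insertion (doubling), which is exactly the kind of step Lemma \ref{lm:divisors}'s proof already performs with powers of $2$.

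So the key steps, in order: (i) apply Lemma \ref{lm:divisors} with $Q$ replaced by a multiple of $\operatorname{lcm}(Q,n_1,\dots,n_K)$, obtaining $N$ and a divisor chain with consecutive ratios in $[\lambda,2]$; (ii) note $n_K\mid N$, pick the largest chain element $d$ with $d\leqslant n_K$ (it exists since $1=d_0\leqslant n_K$) and the next one $d'$; (iii) between $n_K$ and the smallest $d_\ell$ exceeding $n_K$, interpolate by repeatedly doubling: since both endpoints divide $N$ and every chain ratio is $\leqslant 2$, the numbers $n_K, 2n_K, 4n_K,\dots$ up to the first power-of-two multiple that is $\geqslant d_\ell$ all divide $N$ and have ratios exactly $2\in[\lambda,2]$, and the final ratio $d_\ell/(\text{that multiple})$ lies in $[\lambda,2)$ — this needs a small check that it is $\geqslant\lambda$, which follows because the previous chain element $d_{\ell-1}<n_K\leqslant$ that multiple $<d_\ell$ and $d_\ell/d_{\ell-1}\leqslant 2$; (iv) append the remaining chain elements $d_\ell<d_{\ell+1}<\cdots<d_L=N$ verbatim, all ratios already in $[\lambda,2]$. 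Then relabel the whole concatenation as $1=n_1<\cdots<n_M=N$ and check $Q\mid N$ (immediate from (i)) and that all ratios from index $K$ onward lie in $[\lambda,2]$.

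The main obstacle I anticipate is precisely the interpolation bookkeeping in step (iii): matching the user-supplied $n_K$, which is arbitrary, to the rigid divisor chain coming out of Lemma \ref{lm:divisors} without violating either the lower bound $\lambda$ or the upper bound $2$ on any single ratio, and making sure every inserted number genuinely divides $N$. The doubling trick handles the upper bound for free and divisibility as long as $N$ has enough factors of $2$ — which I can guarantee by feeding Lemma \ref{lm:divisors} an input $Q$ that is already multiplied by a large power of $2$, say $2^{n_K}\operatorname{lcm}(Q,n_1,\dots,n_K)$. The lower-bound check for the single "bridging" ratio is the one genuinely delicate inequality, and it should come out of the $\leqslant 2$ spacing of the original chain together with the fact that the bridging multiple of $n_K$ is sandwiched between two consecutive chain elements.
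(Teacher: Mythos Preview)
Your ``cleaner route'' is essentially the paper's proof, and you abandoned it for the wrong reason. The paper applies (the argument of) Lemma~\ref{lm:divisors} to $V:=n_1\cdots n_{K-1}Q$, obtaining a chain $1=e_0<e_1<\cdots<e_L$ with ratios in $[\lambda,2]$, and then simply \emph{multiplies the entire chain by $n_K$}: the appended terms are $n_K e_1, n_K e_2,\ldots, n_K e_L=:N$. All ratios are preserved, the first new ratio $n_K e_1/n_K=e_1/e_0\in[\lambda,2]$ is fine, and $N=n_K e_L$ is divisible by $Q$, by every $n_i$ with $i<K$ (since $V\mid e_L$), and trivially by every $n_K e_j$. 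Your mistake was fixing $N:=N_0$ and then noticing that $n_K N_0$ overshoots; the fix is just to let $N:=n_K N_0$, exactly as you hinted with ``multiply the whole chain so that it starts exactly at $n_K$'' before talking yourself out of it.

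The splicing approach you actually commit to has a genuine gap at step~(iii). You insert powers $n_K,2n_K,\ldots$ ``up to the first power-of-two multiple that is $\geqslant d_\ell$'' and then claim the final ratio $d_\ell/(\text{that multiple})$ lies in $[\lambda,2)$; but if the multiple is $\geqslant d_\ell$ this ratio is $\leqslant 1$, so the sequence is not even increasing. Read charitably (last multiple \emph{below} $d_\ell$), the situation is no better: since $d_{\ell-1}\leqslant n_K<d_\ell\leqslant 2d_{\ell-1}\leqslant 2n_K$, the only such multiple is $n_K$ itself, and the single bridging ratio is $d_\ell/n_K\in(1,2]$ with no lower control. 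For instance with $\lambda=1.9$, $n_K=100$, and consecutive chain elements $d_{\ell-1}=95$, $d_\ell=181$ (ratio $\approx 1.905$), you get $d_\ell/n_K=1.81<\lambda$. Your justification ``$d_{\ell-1}<n_K\leqslant\text{that multiple}<d_\ell$ and $d_\ell/d_{\ell-1}\leqslant 2$'' only yields the \emph{upper} bound $d_\ell/P<2$, not the lower bound $\geqslant\lambda$ you need. The doubling trick cannot repair this in general, because the $d$-chain has no reason to ever land in the window $[\lambda\cdot 2^j n_K,\,2\cdot 2^j n_K]$ for any $j$. Go back to the multiply-by-$n_K$ idea.
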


\begin{proof}
The claim is contained in Lemma \ref{lm:divisors} for $K=1$, so assume $K\geqslant 2$ and denote $V:=n_1\cdots n_{K-1}Q$.
If $V=2^a$ for some $a\in\mathbb{N}$, then take $N=n_1\cdots n_K Q$ and append the list $n_1,\ldots,n_K$ with divisors
\[ 2^1 n_K, 2^2 n_K, \ldots, 2^a n_K = N. \]
Otherwise, $\log_2 V$ is irrational, so choose $a,b\in\mathbb{N}$ such that $\lambda \leqslant V^b/2^a\leqslant 2$. Then we can take $N:=2^a V^b n_K$ and append $n_1,\ldots,n_K$ with its divisors
\[ 2^1 n_K, 2^2 n_K, \ldots, 2^a n_K, V^b n_K, 2^1 V^b n_K, 2^2 V^b n_K, \ldots, 2^a V^b n_K = N. \qedhere \]
\end{proof}

Having established Lemma \ref{lm:divisors2}, we are now ready to prove Theorem \ref{thm02}.

\begin{proof}[Proof of Theorem \ref{thm02}]
We first work with some fixed $1<\lambda<2$.
Recall the sequence $(a_i)_{i=1}^{\infty}$ defined in the theorem formulation and denote $r:=\sum_{i=1}^{\infty}1/a_i$.
Take a $\lambda$-lacunary sequence of positive integers $n_1<n_2<n_3<\cdots$ and suppose that $P((1/n_i)_{i=1}^{\infty})$ contains all rational numbers from a non-empty interval $(\alpha,\beta)$. Since $P((1/n_i)_{i=1}^{\infty})$ is clearly contained in $[0,\sum_{i=1}^{\infty}1/n_i]$, we have
\[ \beta-\alpha \leqslant \sum_{i=1}^{\infty} \frac{1}{n_i}. \]
Inductively we immediately conclude $n_i\geqslant  a_i$ for every $i\in\mathbb{N}$, so
\[ \beta-\alpha \leqslant \sum_{i=1}^{\infty} \frac{1}{a_i} = r. \]
Thus, $r$ is an upper bound on the lengths of rational intervals filled by $\lambda$-lacunary sequences, i.e., $R(\lambda)\leqslant r$.

Now take $\varepsilon>0$, let $K\in\mathbb{N}$ be an index such that $\sum_{i=1}^{K} 1/a_i > r-\varepsilon$, and simply set $n_i:=a_i$ for $1\leqslant i\leqslant K$.
From
\[ \lambda a_i \leqslant \lceil \lambda a_i\rceil = a_{i+1}\leqslant \lceil 2a_i\rceil = 2a_i \]
we see that 
\begin{equation}\label{eq:auxinthm02p}
\lambda \leqslant \frac{n_{i+1}}{n_i} \leqslant 2
\end{equation}
holds for $1\leqslant i\leqslant K-1$.
Now we apply Lemma \ref{lm:divisors2} to $n_1,\ldots,n_K$ (and with $Q=1$), to obtain the integers $n_{K+1},\ldots,n_M=N$. We deduce that \eqref{eq:auxinthm02p} is satisfied for $K\leqslant i\leqslant M-1$ as well, while $N$ is divisible by $n_i$ for all $i$ with $1\leqslant i< M$.
Then we continue the construction just as in the proof of Theorem \ref{thm01}: in the $k$th step we append the existing terms $n_1,\ldots,n_{m_{k-1}}$ (where $m_0:=M$) with $n_{m_{k-1}}$-multiples of the divisors $d_{k,1},\ldots,d_{k,M_k}=N_k$ of some $N_k$ obtained from Lemma \ref{lm:divisors} applied to $Q=p_1\cdots p_k$. This produces an enlarged list of terms $n_1,\ldots,n_{m_k}$ and completes the $k$th step.
Inequality \eqref{eq:auxinthm02p} continues to hold for all indices and conditions \ref{it2}--\ref{it4} of Proposition \ref{prop:general} are verified exactly as before, so we can conclude that $P((1/n_i)_{i=1}^{\infty})$ contains all rational numbers in $[0,\sum_{i=1}^{\infty}1/n_i)$. Thus,
\[ R(\lambda) \geqslant  \sum_{i=1}^{\infty}\frac{1}{n_i} > \sum_{i=1}^{K}\frac{1}{n_i} = \sum_{i=1}^{K}\frac{1}{a_i} > r-\varepsilon. \]
Since $\varepsilon>0$ was arbitrary, we conclude that $R(\lambda)=r$, finishing the proof of the first claim.

As for the second part of the theorem on certain (limiting) values of $R(\lambda)$, if $1<\lambda<K/(K-1)$ for some integer $K \geqslant  2$, then the previously defined sequence $(a_i)_{i=1}^{\infty}$ starts off with $1,2,\ldots,K$, so
\[ R(\lambda) \geqslant  \sum_{i=1}^{K} \frac{1}{i}, \]
proving that $R(\lambda)\to+\infty$ as $\lambda\to1^+$.
Moreover, using the formula for $R(\lambda)$ in the case $\lambda\in(1,2)$, the trivial inequality
\begin{equation}\label{eq:trivialfora}
a_{i}\geqslant  \lambda^{i-1} 
\end{equation}
gives
\[ R(\lambda) \leqslant \sum_{i=1}^{\infty}\frac{1}{\lambda^{i-1}} = \frac{\lambda}{\lambda-1}. \]
On the other hand, Theorem \ref{thm01} implies $R(\lambda)\geqslant 2$ for all such $\lambda$. These estimates together imply that $R(\lambda)\to2$ as $\lambda\to2^-$.
Finally, the claim about $R(\lambda)$ being $0$ for $\lambda\geqslant 2$ is precisely Corollary \ref{cor:necess1}.
\end{proof}

Simply because of \eqref{eq:trivialfora}, the tail of the series representing $R(\lambda)$ can be upper bounded by
\[ \sum_{i=K+1}^{\infty} \frac{1}{a_i} \leqslant \sum_{i=K+1}^{\infty} \frac{1}{\lambda^{i-1}} = \frac{1}{\lambda^{K-1}(\lambda-1)}. \]
Thus, for $\lambda\in(1,2)$, $\varepsilon\in(0,1)$, and
\[ K = 2 + \Big\lfloor \log_\lambda \frac{1}{\varepsilon(\lambda-1)} \Big\rfloor, \]
we know that the partial sum $\sum_{i=1}^{K}1/a_i$ approximates the true value of $R(\lambda)$ to an error smaller than $\varepsilon$.
This allows us to take, say, $\lambda=3/2$ and compute $R(3/2)$ reliably to $50$ decimal digits as
\[ 2.40694938638836442986564472688463596121152697197900\ldots \]
using $287$ terms of the sequence $(a_i)_{i=1}^{\infty}$.


\section{Proof of Theorem \ref{thm03}}
\label{sec:thm03}

\begin{proof}[Proof of Theorem \ref{thm03}]
Take $\varepsilon\in(0,1)$ such that
\begin{equation}\label{eq:thm3pr1}
\lambda + \varepsilon < 2 \quad\text{and}\quad \frac{1}{\lambda+\varepsilon} + \frac{1}{\Lambda+\varepsilon} > 1,
\end{equation}
which is possible due to the assumptions $\lambda < \Lambda/(\Lambda-1)\leqslant 2$, where one can check that the first of these latter two inequalities is equivalent to $1/\lambda + 1/\Lambda >1$.
Take $U\in\mathbb{N}$ large enough such that
\begin{equation}\label{eq:thm3pr3}
(\lambda+\varepsilon)^{-U-1} < (\Lambda+\varepsilon) \Big(\frac{1}{\lambda+\varepsilon} + \frac{1}{\Lambda+\varepsilon} - 1\Big)
\end{equation}
and denote
\begin{equation}\label{eq:thm3pr2}
m_0 := 2 + \Big\lfloor \log_2\frac{1}{\varepsilon} \Big\rfloor.
\end{equation}

Start defining the sequence $(n_i)_{i=1}^{\infty}$ by simply putting $n_i := 2^{i-1}$ for $1\leqslant i\leqslant m_0$.
This completes the $0$th step of the algorithm, so let us proceed with steps $1,2,3,\ldots$.
Suppose that, after $k-1$ steps, we have already defined the terms $n_1,n_2,\ldots,n_{m_{k-1}}$.
First, append the list with the term $n_{m_{k-1}+1}:=\lfloor\Lambda n_{m_{k-1}}\rfloor+1$, which satisfies
\begin{equation}\label{eq:thm3pr6}
\Lambda < \frac{n_{m_{k-1}+1}}{n_{m_{k-1}}} \leqslant \Lambda + \frac{1}{n_{m_{k-1}}} \leqslant \Lambda + \frac{1}{2^{m_0-1}} \stackrel{\eqref{eq:thm3pr2}}{<} \Lambda + \varepsilon. 
\end{equation}
Then proceed by adding terms $n_{m_{k-1}+2},\ldots,n_{m_{k-1}+U+1}$ defined by the recurrence 
\[ n_{i+1}:=\lceil \lambda n_{i}\rceil, \]
so that 
\begin{equation}\label{eq:thm3pr4}
\lambda \leqslant \frac{n_{i+1}}{n_{i}} < \lambda + \frac{1}{n_{i}} \leqslant \lambda + \frac{1}{2^{m_0}} \stackrel{\eqref{eq:thm3pr2}}{<} \lambda + \varepsilon 
\end{equation}
holds for $m_{k-1}+1\leqslant i\leqslant m_{k-1}+U$.
Finally apply Lemma \ref{lm:divisors2} to the existing terms (i.e., with $K=m_{k-1}+U+1$) and with $Q=p_1\cdots p_k$ the product of the first $k$ primes. This produces terms $n_{m_{k-1}+U+2},\ldots,n_{m_k}$ such that 
\begin{equation}\label{eq:thm3pr5}
\lambda \leqslant \frac{n_{i+1}}{n_i} \leqslant 2
\end{equation}
is valid for $m_{k-1}+U+1\leqslant i\leqslant m_k-1$, and such that $n_{m_k}$ is a multiple of all preceding numbers, completing the $k$th step.

To summarize \eqref{eq:thm3pr6}--\eqref{eq:thm3pr5}, by the construction above we have that the ratio $n_{i+1}/n_i$, for $i=1,2,3,\ldots$, takes values:
\begin{itemize}
\item $2$ for the initial $m_0-1$ indices,
\item $\in(\Lambda,\Lambda+\varepsilon)$ for the next index,
\item $\in[\lambda,\lambda+\varepsilon)$ for the next $U$ indices,
\item $\in[\lambda,2]$ for the next block of indices up to $m_1-1$,
\item $\in(\Lambda,\Lambda+\varepsilon)$ for the next index,
\item $\in[\lambda,\lambda+\varepsilon)$ for the next $U$ indices,
\item $\in[\lambda,2]$ for the next block of indices up to $m_2-1$, etc.
\end{itemize}
We aim to verify
\[ \frac{1}{n_{i}} < \sum_{j=i+1}^{m_k} \frac{1}{n_j} + \frac{1}{n_{m_k}} \] 
for $k\in\mathbb{N}\cup\{0\}$ and $m_{k-1}\leqslant i<m_k$ (the condition $m_{-1}\leqslant i$ being a void one).
However, the only values of $i$ for which this inequality is not an immediate consequence of Remark \ref{rem:cond} are those with $i=m_{k-1}$, $k\in\mathbb{N}$. In that case, the inequality follows from
{\allowdisplaybreaks
\begin{align*}
\sum_{j=1}^{U+1} \frac{1}{n_{i+j}} 
& \stackrel{\eqref{eq:thm3pr4},\eqref{eq:thm3pr6}}{\geqslant } \frac{1}{n_{i}} \frac{1}{\Lambda+\varepsilon} \sum_{j=0}^{U} \frac{1}{(\lambda+\varepsilon)^j} \\
& \quad \ = \frac{1}{n_{i}} \frac{1}{\Lambda+\varepsilon} \frac{1-(\lambda+\varepsilon)^{-U-1}}{1-(\lambda+\varepsilon)^{-1}} \\
& \stackrel{\eqref{eq:thm3pr1},\eqref{eq:thm3pr3}}{>} \frac{1}{n_{i}}. 
\end{align*}
}
Therefore, $(n_i)_{i=1}^{\infty}$ satisfies condition \ref{it4} from Proposition \ref{prop:general}. By construction it also satisfies conditions \ref{it2}--\ref{it3} of the same proposition, while it is clearly $\lambda$-lacunary. Applying Proposition \ref{prop:general} then concludes the theorem.
\end{proof}

Let us comment on the optimality of the lacunarity parameter in Theorem \ref{thm03}. If $(n_i)_{i=1}^{\infty}$ is a $\lambda$-lacunary sequence with $\lambda\geqslant \Lambda/(\Lambda-1)$ and if $i$ is an index such that $n_{i+1}>\Lambda n_i$, then $n_j > \Lambda \lambda^{j-i-1} n_i$ for $j>i$, so that
\begin{align*}
\sum_{j=i+1}^{\infty} \frac{1}{n_j} &< \frac{1}{n_i\Lambda} \sum_{k=0}^{\infty} \frac{1}{\lambda^k} \\
&= \frac{1}{n_i\Lambda} \frac{1}{1-1/\lambda}\\
&\leqslant \frac{1}{n_i},
\end{align*}
 contradicting \eqref{eq:neceint}.
Corollary \ref{cor:necess2} then prevents $P((1/n_i)_{i=1}^{\infty})$ from containing all rational numbers in $[0,\sum_{i=1}^{\infty}1/n_i)$.

Finally, from the proofs of Theorems \ref{thm01}--\ref{thm03} one can see that all requirements imposed on the sequence $(n_i)_{i=1}^{\infty}$ can be combined, if one so desires. Thus, it is possible to formulate one big theorem on the existence of a $\lambda$-lacunary sequence with $\limsup_{i\to\infty} n_{i+1}/n_i = \lambda/(\lambda-1)$ that represents all rational numbers in the interval $(0,R(\lambda)-\varepsilon)$ infinitely many times for a fixed $\varepsilon>0$. We did not think such a formulation would necessarily be productive in order to comprehend these results, however.


\appendix

\section{Alternative proof of Eppstein's theorem}

Here is a reformulation of \cite[Theorem 1]{Epp21}.

\begin{theorem}\label{thm04}
If $S\subseteq\mathbb{N}$ is a set satisfying $2S\subseteq S$ and containing a multiple of each odd number, then $P((1/n)_{n\in S})=[0,\sum_{n\in S}1/n)\cap\mathbb{Q}$.
\end{theorem}

Even though the set $S$ in the theorem statement does not fulfill the conditions of Proposition \ref{prop:general}, we can nevertheless use a very similar proof technique to reprove this result.

\begin{proof}
Let $(n_i)_{i=1}^\infty$ be the strictly increasing sequence enumerating $S$.
Also, let 
\[ q = \frac{a}{b} < \sum_{i=1}^{\infty} \frac{1}{n_i} \] 
be a positive rational number and let $K\in\mathbb{N}$ be sufficiently large such that $q \leqslant r := \sum_{i=1}^{K} 1/n_i$. Define $L$ as the least common multiple of the integers $n_1, \ldots, n_K, b$ and write $L = 2^{k} \ell$, with $k\in\mathbb{N}\cup\{0\}$ and $\ell$ an odd positive integer. Moreover, let $J,m\in\mathbb{N}$ be such that $n_J$ is divisible by $\ell$ and $2^{m-1} n_K < n_J \leqslant 2^{m} n_K$. Such an index $J$ exists by the assumption that $S$ contains multiples of all odd integers. Denote $M := K+k+2m-1$ and define the sequence $n'_1, \ldots, n'_M$ as
\[ n_1, \ldots, n_K, 2^1 n_K, \ldots, 2^{m-1} n_K, n_J, 2^1 n_J, \ldots, 2^{k+m-1} n_J. \]

First of all, by the assumption $2S \subseteq S$ we see that all integers $n'_i$ we have defined thus far are elements of $S$, with $n'_{i+1}\leqslant 2n'_i$ for all $i \ge 1$. Secondly, $n'_M$ is divisible by $2^{k} \ell = L$, so that $n'_i$ divides $n'_M$ for all $1\leqslant i \leqslant K$. Similarly, one can check that $n'_M$ is divisible by all $n'_i$ with $1\leqslant i<M$, while it is also divisible by $b$. 
Just as in Remark \ref{rem:cond}, the property $n'_{i+1}\leqslant 2n'_i$ implies that the real numbers $x_i:=1/n'_i$ satisfy the condition of Lemma \ref{lm:reals}.
As a consequence, the sums 
\[ \sum_{i\in T}\frac{1}{n'_i}, \quad T\subseteq\{1,\ldots,K+m+M\} \]
$(1/n'_M)$-densely fill in the segment $[0,r]$.
Now we reason similarly as in the proof of Proposition \ref{prop:general}: all of these sums are multiples of $1/n'_M$, so they attain all values from $[0,r]\cap(1/n'_M)\mathbb{Z}$.
Since $q$ can be rewritten as a fraction with denominator equal to $n'_M$, it can therefore be represented as a finite sum 
\[ \frac{1}{n'_{i_1}} + \ldots + \frac{1}{n'_{i_t}} \]
for some indices $i_1<\cdots<i_t$. 
\end{proof}


\section*{Acknowledgments}
V. K. was supported by the Croatian Science Foundation under the project number HRZZ-IP-2022-10-5116 (FANAP).
The authors are grateful to Thomas Bloom for creating and maintaining the website \emph{Erd\H{o}s problems}, which helped initiate this project. They would also like to thank Sayan Dutta and Zach Hunter for their comments on the website discussion forum. Finally, the authors are indebted to the anonymous referee for their numerous suggestions on improving the exposition.


\bibliography{ErdosProblem355}{}
\bibliographystyle{plainurl}

\end{document}